\documentclass[12pt]{amsart}
\usepackage{etex}

\usepackage[utf8]{inputenc} 
\usepackage{amsmath}
\usepackage{amsfonts}
\usepackage{amscd}
\usepackage{amssymb,amsthm}
\usepackage{enumerate}
\usepackage{tikz,tikz-cd}
\usepackage{mathtools}
\usepackage{mathrsfs}
\usetikzlibrary{calc}
\usepackage{hyperref}
\usepackage{bbm}
\usepackage{thmtools} 
\usepackage{tikz,tikz-cd}
\usetikzlibrary{shapes,calc,arrows,intersections,decorations.pathreplacing,decorations.markings,shapes.geometric,calc,positioning,backgrounds}
\usepackage{enumitem}

\declaretheorem[name=Theorem, numberwithin=section]{theorem}
\declaretheorem[name=Lemma,sibling=theorem]{lemma}

\newtheorem{corollary}[theorem]{Corollary}
\newtheorem{proposition}[theorem]{Proposition}

\newtheorem{definition}[theorem]{Definition}

\newtheorem{example}[theorem]{Example}

\usepackage{cleveref} 

\newcommand{\tr}{\operatorname{tr}} 
\newcommand{\E}{\mathbb{E}}          
\newcommand{\Prob}{\mathbb{P}}
\newcommand{\Hilb}{\mathcal{H}} 
\newcommand{\Var}{\mathrm{Var}}  

\newcommand{\CC}{\mathbb{C}}
\newcommand{\A}{\mathcal{A}}
\newcommand{\N}{\mathbb{N}}
\newcommand{\I}{\mathcal{I}}
\newcommand{\pp}{\mathcal{P}}
\newcommand{\s}{\mathcal{S}}
\newcommand{\K}{\mathcal{K}}
\newcommand{\kk}{{\bf \bar{k}}}

\newcommand{\gp}{\mathop{\begin{tikzpicture}[baseline]
\node[circle, fill=cyan, draw=black, inner sep=0pt, minimum size=3pt] (mid) at (0, 2.5pt) {};
\foreach \x in {0,60,...,300} {
\node[circle, fill=cyan, draw, inner sep=0pt, minimum size=3pt] (\x) at ($(mid)!6pt!(mid.\x)$) {};
\draw (\x) -- (mid) ;
}
\end{tikzpicture}}}

\begin{document}

\title[SYK-Free]{ Limit joint distributions of SYK models with partial interactions, mixed q-Gaussian models and asymptotic $\varepsilon$-freeness}
\author{Weihua Liu}
\address{School of Mathematics, Zhejiang University, Hangzhou, Zhejiang 310058, China}
\email{\url{lwh.math@zju.edu.cn}}

\author{Haoqi Shen}
\address{School of Mathematics, Zhejiang University, Hangzhou, Zhejiang 310058, China}
\email{\url{haoqi_shen@zju.edu.cn}}

\maketitle

\begin{abstract}
 We study the joint distribution of SYK Hamiltonians for different systems with specified overlaps. 
 We show that, in the large-system limit, their joint distribution converges in distribution to a mixed $q$-Gaussian system.
 We  explain that the graph product of diffusive abelian von Neumann algebras is isomorphic to a $W^*$-probability space generated by the corresponding $\varepsilon$-freely independent random variables with semicircular laws which form a special case of mixed $q$-Gaussian systems that can be approximated by our SYK Hamiltonian models.
Thus, we obtain a random model for asymptotic $\varepsilon$-freeness.

\end{abstract}

\section{Introduction}

In 1993, Sachdev and Ye introduced a new random model involving Gaussian distributions and spin operators to study quantum spin glasses and non-Fermi liquids \cite{SY93}. 
Later,  in 2015, the Sachdev--Ye--Kitaev (SYK) model, a simple variant of the Sachdev--Ye model  for  $n$ Majorana fermions with random interactions,  was proposed by Kitaev in his talk \cite{Ki15}. 
The general form of the SYK Hamiltonian is given by
$$H_{\mathrm{SYK}}=(\sqrt{-1})^{q_n / 2} \frac{1}{\sqrt{\frac{n^{q_n-1}}{\left(q_n-1\right)!J^2}}} \sum_{1 \leq i_1<i_2<\cdots<i_{q_n} \leq n} J_{i_1 i_2 \cdots i_{q_n}} \psi_{i_1} \psi_{i_2} \cdots \psi_{i_{q_n}},$$
where the $J_{i_1 i_2 \cdots i_{q_n}}$'s are independent standard Gaussian variables and the $ \psi_i$'s are fermionic operators satisfying the canonical anticommutation relations 
$$\psi_i \psi_j+\psi_j \psi_i=2 \delta_{i,j}.$$

The SYK model with large-$N$ Majorana fermions and fixed interaction orders $q_n = 2, 4$ was studied in \cite{MS16}. 
Subsequently, a more general case of mathematical interest, in which the interaction length $q_n$ depends on $n$, was investigated by Feng, Tian, and Wei~\cite{FTW19}.
In their setting,  the coefficients $(\sqrt{-1})^{q_n / 2} /\sqrt{\frac{n^{q_n-1}}{\left(q_n-1\right)!J^2}}$ of the model 
are replaced by $(\sqrt{-1})^{\left\lfloor q_n / 2\right\rfloor}/{\binom{n}{q_n}^{1 / 2}}$, ensuring that $H_{\mathrm{SYK}}$ is self-adjoint with mean zero and variance one. 
Namely, they considered the following Hamiltonian:
\begin{equation}\label{model: FTW}
H=(\sqrt{-1})^{\lfloor q_n / 2 \rfloor} \frac{1}{\sqrt{\binom{n}{q_n}}} \sum_{1 \leq i_1<i_2<\cdots<i_{q_n} \leq n} J_{i_1 i_2 \cdots i_{q_n}} \psi_{i_1} \psi_{i_2} \cdots \psi_{i_{q_n}}.
\end{equation}

The fermion operators $\psi_i$ can be realized using Pauli matrices:

$$
\sigma_1=\left(\begin{array}{ll}
0 & 1 \\
1 & 0
\end{array}\right), \quad \sigma_2=\left(\begin{array}{cc}
0 & -i \\
i & 0
\end{array}\right), \quad \sigma_3=\left(\begin{array}{cc}
1 & 0 \\
0 & -1
\end{array}\right).
$$

For $n=2r$, each Majorana fermion is constructed as an $r$-fold tensor product:
$$
\begin{array}{rlrl}
\psi_1 & =\sigma_1 \otimes I_2 \otimes \cdots \otimes I_2 & \psi_{r+1} & =\sigma_2 \otimes I_2 \otimes \cdots \otimes I_2 \\
\psi_2 & =\sigma_3 \otimes \sigma_1 \otimes \cdots \otimes I_2 & \psi_{r+2} & =\sigma_3 \otimes \sigma_2 \otimes \cdots \otimes I_2 \\
\vdots & & \vdots & \\
\psi_r & =\sigma_3 \otimes \sigma_3 \otimes \cdots \otimes \sigma_1 & \psi_{2 r} & =\sigma_3 \otimes \sigma_3 \otimes \cdots \otimes \sigma_2
\end{array}
$$
where the $I_2$ in the tensor products represents the $2 \times 2$ identity matrix. 
For $n=2r-1$, we simply select $2r-1$ of these elements.
In general, by the GNS representation of the algebra generated by $\psi_i$'s with respect to the normalized trace $\operatorname{tr}(A)=\frac{1}{2^{n/2}} \operatorname{Tr}(A)$, the $\psi$'s can be realized on a $2^{n/2}$-dimensional complex Hilbert space.

Before the work of Feng, Tian, and Wei on SYK models, a related model, the quantum $q$-spin glass model, was considered in~\cite{ES14,KLW15a,KLW15b}. 
For $q \geq 1$, the Hamiltonian of a quantum $q$-spin glass is
\begin{equation}\label{model: ES}
H_n^q=3^{-q / 2}\binom{n}{q}^{-1 / 2} \sum_{1 \leq i_1<\cdots<i_q \leq n} \sum_{a_1, \ldots, a_q=1}^3 \alpha_{a_1, \ldots, a_q,\left(i_1, \ldots, i_q\right)} \sigma_{i_1}^{a_1} \cdots \sigma_{i_q}^{a_q},
\end{equation}
where the coefficients $\alpha_{a_1, \ldots, a_q, (i_1, \ldots, i_q)}$ play the role of $J_{i_1 i_2 \cdots i_{q_n}} $ and   are i.i.d. random variables with mean $0$ and variance $1$ and $\sigma_i^a$ are defined as 
$$
\sigma_i^a=I_2^{\otimes(i-1)} \otimes \sigma_a \otimes I_2^{\otimes(n-i)},
$$
where $\sigma_a$ (for $a=1,2,3$) are the three Pauli matrices introduced earlier.

The main difference between \eqref{model: FTW} and \eqref{model: ES} is that the latter considers all possible combinations of Pauli matrices, whereas the former uses only a subset of them in order to realize the CAR relations.
Erd\H{o}s and Schr\"oder showed in~\cite{ES14} that the limiting density of $H_n^q$ has a phase transition in the regimes $q^2 \ll n$, $q^2 / n \rightarrow a$ and $q^2 \gg n$.
In~\cite{FTW19}, the parity of $q_n$ plays an important role.
They demonstrated that the large-$N$ distribution of the empirical measure of eigenvalues of the SYK model depends on the limit of $q^2_n / n$ and on the parity of $q_n$.
As a result, if $q_n$ is a constant, the SYK model becomes a very sparse random matrix model, exhibiting behavior similar to classical Brownian motion or the Bernoulli distribution.  
When $q^2_n / n \to \lambda \neq 0$, it was shown using the moment method that the limiting distribution is the $q$-Gaussian distribution, where
 $q=(-1)^{q_n}e^{-2\lambda}$ when $q_n$ are of the same parity. 
 
In summary, the Erd\H{o}s--Schr\"oder model yields the $q$-Gaussian distribution for $0 \le q \le 1$, while the Feng--Tian--Wei model yields the $q$-Gaussian distribution for $-1 \le q \le 1$.
The properties of this distribution were studied via $q$-Hermite orthogonal polynomials~\cite{Sz26}, the study of which dates back to Rogers in 1894~\cite{Ro93}.
On the other hand, the $q$-Gaussian distribution naturally arises from a deformation model interpolating between CCR and CAR, which is related to free probability theory.

Free probability was introduced by Voiculescu to address the isomorphism problem of free group von Neumann algebras. 
In this theory, a noncommutative analogue of the classical independence relation, called free independence, was introduced. 
The study of free independence is deeply connected to the reduced free product of probability spaces, which arises as a modification of the Fock space construction.
More importantly, the free central limit law, namely the semicircular law, is the spectral distribution of the Gaussian operator, which is the sum of the creation and annihilation operators on a Fock space with the canonical inner product, with respect to the vacuum state.
In the CCR, CAR, and free Fock space settings with the canonical inner product, the relation between the creation operators $l_i$ and the annihilation operators $l_i^*$ is given by $l_i^* l_j - q l_j l_i^* = \delta_{i,j} $, where $q = 1, -1, 0$, corresponding to CCR, CAR, and the free case, respectively.
By introducing a \lq\lq twisted"  inner product on the algebraic Fock space of a Hilbert space $\Hilb$ with orthonormal basis $\{e_i\mid i \in I\}$, Bo\.zejko and Speicher constructed examples of creation operators $l(e_i)$ and annihilation operators $ l(e_i)^* $ such that $$l(e_i)^* l(e_j) - q l(e_j) l(e_i)^* = \delta_{i,j} $$ for all $-1\leq q \leq 1$~\cite{BS91}.  
For fixed $q$, a $q$-Gaussian operator  is defined to be $l(e_i)+l(e_i)^*$. 
By computing the orthogonal polynomials~\cite{Sp97},  the spectral distribution of $q$-Gaussian operators with respect to the vacuum states $\tau_\Hilb$ is exactly the $q$-Gaussian distribution.
Following the Brownian motion-like properties of $q$-Gaussian distributions, Pluma and Speicher extended the SYK model to the multivariate case and showed that the limiting joint distribution of independent SYK models of the same form is exactly the joint distribution of a $q$-Gaussian system~\cite{PS22}. 
Therefore, SYK models provide a random model for $q$-Gaussian variables.
Some other random matrix models for $q$-Gaussian operators were introduced by Speicher~\cite{Sp92} and \'Sniady~\cite{Sn01}.

In the context of the random SYK model, where models with different interaction lengths lead to various limit laws, a natural question arises:

{\bf Question:} What is the limiting joint distribution of independent SYK models of different forms?

It is evident that we need to determine the relations between $q$-Gaussian operators. 
In fact, we will see that these operators satisfy a mixed $q$-relation, which generalizes the $q$-relation introduced by Speicher~\cite{Sp93}. In the mixed $q$-Gaussian setting, the creation and annihilation operators satisfy the relation
$$    l(e_i)^* l(e_j) - q_{i,j} l(e_j) l(e_i)^* = \delta_{i,j},$$
where $-1\leq q_{i,j} \leq 1$ and $q_{i,j} = q_{j,i}$. We call the family $\{s_i=l(e_i)+l(e_i)^* \mid i\in I\}$ a $Q=(q_{i,j})$-system. 
The existence of mixed $q$-Gaussian systems was first established through probabilistic methods~\cite{Sp93} and, more recently, using ultraproduct techniques by Junge and Zeng~\cite{JZ24}. More importantly, the mixed $q$-Gaussian system can be constructed using "twisted" inner products on Fock spaces~\cite{BS94}, which naturally leads to questions in operator algebra theory.
 
To avoid confusion, we will use $r_n$ to denote the interaction length of  SYK models.  Then, we have the following result.

\begin{theorem}
Let $\I$ be an index set.  For each $k\in \I$ and $n\in \N$, let $H_{k,n}$ be the SYK model such that 
$$H_{k,n}=\frac{(\sqrt{-1})^{\left\lfloor r_{k,n} / 2\right\rfloor}}{\binom{n}{r_{k,n}}^{1 / 2}} \sum_{1 \leq i_1<\cdots<i_{r_{k,n}} \leq n} J_{k;i_1, \ldots, i_{r_{k,n}}} \psi_{i_1} \cdots \psi_{i_{r_{k,n}}}$$
where the $J_{k;i_1,\dots,i_{r_{k,n}}}$ are independent
 random variables with $\mathbb{E}[J_{k;i_1,\dots,i_{r_{k,n}}}]=0$
and $\mathbb{E}[J_{k;i_1,\dots,i_{r_{k,n}}}^2]=1$, and such that
for each integer $p\ge 3$ there exists a constant $C_p<\infty$ such that
$$
|\mathbb{E}\bigl[J_{k;i_1,\dots,i_{r_{k,n}}}^p\bigr]|\le C_p,
$$
and that the $\psi_{i}$ are Majorana fermions satisfying the canonical anticommutation relations. 
For each $k\in \I$, assume that the $r_{k,n}$'s all share the same parity for all $n$ and 
$$
\lim\limits_{n\rightarrow \infty}\frac{r_{i,n}}{n}=0, \quad \lim\limits_{n\rightarrow \infty}\frac{r_{i,n}r_{j,n}}{n} = \lambda_{i,j} \in[0, \infty].
$$
Then, for $k_1,...,k_d\in \I$, we have 
$$
\lim\limits_{n \rightarrow \infty} \mathbb{E}\left[\operatorname{tr} \left( H_{k_1,n} \cdots H_{k_d,n} \right)\right]
=\tau_\Hilb\left(s_{k_1}\cdots s_{k_d}\right),
$$
where $\{s_i\mid i\in \I\}$ is a $Q=(q_{i,j})$-system such that  $q_{i,j}=(-1)^{r_{i,n}r_{j,n}}e^{-2\lambda_{i,j}}.$
\end{theorem}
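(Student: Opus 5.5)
We use the moment method and compute $\mathbb{E}[\operatorname{tr}(H_{k_1,n}\cdots H_{k_d,n})]$ directly. Write $H_{k,n}=\binom{n}{r_{k,n}}^{-1/2}\sum_{A}J_{k;A}\Psi_A$. Here $A$ runs over $r_{k,n}$-subsets of $\{1,\dots,n\}$, and $\Psi_A=(\sqrt{-1})^{\lfloor r_{k,n}/2\rfloor}\psi_{a_1}\cdots\psi_{a_{r}}$ for $A=\{a_1<\dots<a_r\}$, so that $\Psi_A$ is self-adjoint with $\Psi_A^2=1$.

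Expanding the product gives a sum over tuples $(A_1,\dots,A_d)$ of
\[
\mathbb{E}\bigl[J_{k_1;A_1}\cdots J_{k_d;A_d}\bigr]\,\operatorname{tr}(\Psi_{A_1}\cdots\Psi_{A_d}).
\]
Independence and the mean-zero assumption force every pair $(k_j,A_j)$ to appear at least twice. Grouping tuples according to the induced set partition $\pi$ of $\{1,\dots,d\}$, every block of $\pi$ must consist of indices carrying the same color $k$ and the same set $A$.

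The first step is to show that partitions with a block of size $\ge 3$ contribute $o(1)$. For such $\pi$ the number of free sets is at most $(d-1)/2$. Each set $A$ of color $k$ carries the normalizing weight $\binom{n}{r_k}^{-1}$, and the blocks of size $2$ carry weight exactly $1$ after summation. Using $|\operatorname{tr}|\le1$ and the moment bounds $C_p$, the contribution is bounded by $\prod\binom{n}{r_k}^{-(|B|/2-1)}$ times a constant. This tends to zero because $\binom{n}{r_k}\to\infty$; the exception $r_{k,n}=0$ is trivial and can be treated separately. Hence only pair partitions survive, and for these $\mathbb{E}[J^2]=1$.

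The second step handles a fixed pair partition $\pi$, all of whose pairs are color-consistent; otherwise the term is zero, matching the $\tau_\Hilb$ formula in which $\tau_\Hilb(s_{k_1}\cdots s_{k_d})$ sums over pairings with $k_i=k_j$. For each pair we choose an independent uniformly random $r_k$-subset $A$. Using $\Psi_A\Psi_B=(-1)^{|A||B|-|A\cap B|}\Psi_B\Psi_A$, which follows from the CAR relations, we commute the elements until each pair becomes adjacent and then use $\Psi_A^2=1$. This yields
\[
\operatorname{tr}(\Psi_{A_1}\cdots\Psi_{A_d})=\prod_{\text{crossings }(V,W)}(-1)^{|A_V||A_W|+|A_V\cap A_W|}.
\]
The normalized sum therefore equals $\mathbb{E}\prod_{\text{crossings}}(-1)^{r_ir_j}(-1)^{|A_V\cap A_W|}$, with the sets chosen independently and uniformly. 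The matching limit formula is the Bożejko--Speicher/Speicher mixed $q$-Wick formula
\[
\tau_\Hilb(s_{k_1}\cdots s_{k_d})=\sum_{\pi}\prod_{\text{crossings}}q_{k(V),k(W)},
\]
which we take as known for the $Q$-system.

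The main obstacle is the third step: showing that for independent uniform sets the joint expectation $\mathbb{E}\prod_{\text{crossings}}(-1)^{|A_V\cap A_W|}$ converges to $\prod e^{-2\lambda_{k(V),k(W)}}$. For a single pair of sets, $|A\cap B|$ is hypergeometric with mean $r_ir_j/n\to\lambda_{ij}$. Since $r_i/n\to0$, it converges to a Poisson$(\lambda_{ij})$ variable, and $\mathbb{E}(-1)^{\mathrm{Poi}(\lambda)}=e^{-2\lambda}$. When $\lambda_{ij}=\infty$ we instead show directly that $\mathbb{E}(-1)^{|A\cap B|}\to0$.

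Joint convergence requires more care. The intersections $|A_V\cap A_W|$ over the various pairs are not independent. Our plan is to condition successively: reveal the sets one at a time, so that given the earlier sets, the sizes of the new set's intersections with the earlier ones are multivariate hypergeometric. Since $r/n\to0$, the pairwise overlaps among the earlier sets are negligible relative to their sizes, with $|A_V\cap A_W|=O_P(r^2/n)$, which is $o(r)$. Multivariate hypergeometric variables are then asymptotically independent Poissons, which yields the factorization.

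For the case $\lambda=\infty$, or when $r_k$ is bounded, we use a uniform bound on the conditional expectation $|\mathbb{E}[(-1)^{\text{hypergeom}}\mid\cdot]|$. Since every factor is bounded by $1$, dominated convergence gives the limit. Summing over the finitely many pair partitions completes the proof.
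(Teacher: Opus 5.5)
Your Steps 1 and 2 match the paper: \Cref{lem:appear_twice} removes non-pair partitions, and \Cref{lem: probabilistic} reduces each pair partition to $\E\bigl[(-1)^{\sum_{E}|R_i\cap R_j|}\bigr]$ for independent uniform sets. (The statement is the case $A_{k,n}=\{1,\dots,n\}$ of \Cref{main theorem}.) Your Step 3 takes a genuinely different route, and the route is sound.

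\textbf{The paper's route.} It treats the total crossing count $X=\sum_{(i,j)\in E}|R_i\cap R_j|$ globally:
\begin{itemize}
\item the intersections for distinct crossing pairs are asymptotically disjoint (\Cref{lem:pair_disjoint1});
\item computing $\E\binom{X}{k}$ through inclusion probabilities gives joint Poisson factorial moments (\Cref{lem: n-poisson});
\item the series $(-1)^X=\sum_k\binom{X}{k}(-2)^k$ is summed by dominated convergence.
\end{itemize}
The case $\lambda=\infty$ is a separate lemma (\Cref{lem:singular}). It conditions on the other sets and on the parts of $R_1,R_2$ inside their union, then applies the Feng--Tian--Wei bound $F(p,q,m)\le e^{-a_pa_q/2m}$ with Chebyshev estimates for $p$ and $q$.

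\textbf{Your route.} Sequential conditioning avoids that global bookkeeping. It becomes cleaner if you observe that $\sum_{V}|A_V\cap A_W|\equiv|A_W\cap D_W|\pmod 2$, where $D_W$ is the symmetric difference of the earlier crossing partners $A_V$ of $W$. Then the conditional expectation given the earlier sets is exactly the single hypergeometric sign $F(|D_W|,r_W,n)$, and no multivariate hypergeometric is needed. Since $\E|A_V\cap A_{V'}|=r_Vr_{V'}/n=o(r_V)$, you get $|D_W|=(1+o_P(1))\sum_V r_V=o(n)$. Hence $r_W|D_W|/n\to\sum_V\lambda_{V,W}$ in probability, and $F(|D_W|,r_W,n)\to\prod_V e^{-2\lambda_{V,W}}$ in probability, with the right side $0$ if some $\lambda_{V,W}=\infty$. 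Bounded convergence and induction over $W$ then give the product.

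\textbf{What each buys.} Your route treats finite and infinite $\lambda$ in one sweep. The paper's route yields the stronger by-product of joint Poisson convergence of all crossing intersection sizes.

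\textbf{Two corrections.}
\begin{itemize}
\item Your ``uniform bound on the conditional expectation'' must be an explicit decay estimate for $F(p,q,n)$ when $pq/n\to\infty$ with $p,q=o(n)$, for instance the FTW19 bound the paper cites. This is the only nontrivial input in the $\lambda=\infty$ case and should be stated.
\item The case $r_{k,n}=0$ is not trivial. Then $H_{k,n}=J_{k;\emptyset}$ is a scalar, and $\E[\tr(H_{k,n}^4)]=\E[J_{k;\emptyset}^4]$ need not equal $\tau_\Hilb(s_k^4)=3$ (here $q_{k,k}=1$). This case must therefore be excluded, which the paper does implicitly by using $\binom{n}{r_{k,n}}\to\infty$.
\end{itemize}
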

The condition that, for $k_1,...,k_d\in \I$,
$$
\lim\limits_{n \rightarrow \infty} \mathbb{E}\left[\operatorname{tr} \left( H_{k_1,n} \cdots H_{k_d,n} \right)\right]
=\tau_\Hilb\left(s_{k_1}\cdots s_{k_d}\right),
$$
where $\{s_i\mid i\in \I\}$ denotes the mixed $q$-Gaussian system,  means that the family of SYK models converges in distribution to the mixed $q$-Gaussian system. We will write
$$
\{H_{k,n}\mid k\in \I\} \xrightarrow[]{d}\{s_k\mid k\in \I \}
$$
for simplicity.

The above theorem shows that once the limiting distribution of the SYK models is determined, then the $Q$-relation is also determined.  
This naturally raises a question: how can we realize other values for $q_{i,j}$? 
To answer this question, we find it natural to consider interacting SYK models from different systems with certain overlaps.
This also allows us to study how subsystems of SYK models affect each other when the systems satisfy certain specified relations.
Indeed, we have the following result.
\begin{theorem}\label{main theorem}
Let $\I$ be an index set.  
For each $k\in \I$ and $n\in \N$, let $A_{k,n}\subset \N$ be such that $|A_{k,n}|=n$ and let $r_{k,n}\in \N$ satisfy $\lim\limits_{n\rightarrow \infty}\frac{r_{k,n}}{n}=0$ and assume that  the $r_{k,n}$'s all share the same parity for all $n$. 
Let
$$ 
  H_{k,n}=\frac{(\sqrt{-1})^{\left\lfloor r_{k,n} / 2\right\rfloor}}{\binom{n}{r_{k,n}}^{1 / 2}} \sum_{ i_1<\cdots<i_{r_{k,n}} \in A_{k,n}} J_{k;i_1, \ldots, i_{r_{k,n}}} \psi_{i_1} \cdots \psi_{i_{r_{k,n}}},
$$
where the $J_{k;i_1,\dots,i_{r_{k,n}}}$ are independent
 random variables with $\mathbb{E}[J_{k;i_1,\dots,i_{r_{k,n}}}]=0$
and $\mathbb{E}[J_{k;i_1,\dots,i_{r_{k,n}}}^2]=1$, and such that
for each integer $p\ge 3$ there exists a constant $C_p<\infty$ such that
$$
|\mathbb{E}\bigl[J_{k;i_1,\dots,i_{r_{k,n}}}^p\bigr]|\le C_p,
$$
and that the $\psi_{i}$ are Majorana fermions satisfying the canonical anticommutation relations. 
If
$$
\lim\limits_{n\rightarrow \infty}\frac{r_{i,n}r_{j,n}}{n} \cdot \frac{\lvert A_{i,n}\cap A_{j,n} \rvert}{n}=\lambda_{i,j} \in[0, \infty],
$$
then 
$$
\{H_{k,n}\mid k\in \I\} \xrightarrow[]{d}\{s_k\mid k\in \I \}
$$
where $\{s_k\mid k\in \I\}$ is a $Q=(q_{i,j})$ system such that  $q_{i,j}=(-1)^{r_{i,n}r_{j,n}}e^{-2\lambda_{i,j}}.$
\end{theorem}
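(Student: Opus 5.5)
We use the moment method. The limiting side is fixed by the standard mixed $q$-Gaussian Wick formula (Bo\.zejko--Speicher, Speicher):
$$
\tau_\Hilb(s_{k_1}\cdots s_{k_d})=\sum_{\pi\in P_2(d),\ \pi\le \ker k}\ \prod_{\{(a,b),(c,e)\}\in \mathrm{Cr}(\pi)} q_{k_a,k_c},
$$
where $P_2(d)$ is the set of pair partitions, the condition $\pi\le\ker k$ means each pair joins equal indices, and $\mathrm{Cr}(\pi)$ is the set of crossing pairs of blocks. We will show that $\mathbb{E}[\operatorname{tr}(H_{k_1,n}\cdots H_{k_d,n})]$ converges to this sum.

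\textbf{Step 1: expand and reduce to pairings.} Expand the product. Each $H_{k,n}$ is a normalized sum of $J_{k;I}\psi_I$, where $I\subset A_{k,n}$ with $|I|=r_{k,n}$ and $\psi_I$ is the ordered product. Taking expectation in the independent $J$'s gives
$$
\sum_{\sigma\in P(d)}\ \sum_{I_1,\dots,I_d\ \text{with}\ \ker(k,I)=\sigma}\ \mathbb{E}\Bigl[\prod_a J_{k_a;I_a}\Bigr]\,\operatorname{tr}(\psi_{I_1}\cdots\psi_{I_d}).
$$
Blocks of size one vanish because the $J$'s are centred. Any block of size at least three loses a factor $\binom{n}{r}^{-1/2}$ relative to the count of free index sets. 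Because the moments are bounded by $C_p$ and $|\operatorname{tr}(\cdot)|\le1$, the total contribution of these non-pair $\sigma$ is bounded by a constant times $\binom{n}{r_{k,n}}^{-1/2}\to0$. This is exactly as in Feng--Tian--Wei and Pluma--Speicher. Here $r_{k,n}\ge1$, and we may handle $r_{k,n}$ bounded separately, since $\binom nr\ge n$ still. So only pair partitions $\pi\le\ker k$ survive, with coefficient $1$.

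\textbf{Step 2: evaluate a fixed pairing.} For a pair partition $\pi$, each block $\{a,b\}$ carries a common random subset $I_a=I_b$ drawn uniformly from the $r_{k_a,n}$-subsets of $A_{k_a,n}$. The normalizations exactly cancel the counting, so the contribution is the expectation of $\operatorname{tr}(\psi_{I_1}\cdots\psi_{I_d})$ over independent uniform subsets, multiplied by the phase factors. By the CAR, $\psi_I\psi_J=(-1)^{|I||J|-|I\cap J|}\psi_J\psi_I$. Also $\psi_I^2=\pm1$, with the sign cancelled by $(\sqrt{-1})^{2\lfloor r/2\rfloor}$.

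We disentangle the word by successively bringing paired letters together. A standard induction, as in Pluma--Speicher, shows that the trace equals $\prod_{\text{crossings}}(-1)^{r_{k_a}r_{k_c}+|I_a\cap I_c|}$, and that nesting pairs contribute no sign. The key point is that a letter passed twice across a pair's interior, as happens in nesting, picks up its sign squared. Independence of the blocks then gives
$$
\prod_{\text{crossings}}(-1)^{r_{k_a,n}r_{k_c,n}}\ \mathbb{E}\bigl[(-1)^{|I_a\cap I_c|}\bigr]
$$
only approximately, since different crossings share subsets. This dependence is the main obstacle addressed in Step 3.

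\textbf{Step 3 (main obstacle): the joint law of the intersections.} The variable $|I_a\cap I_c|$ is hypergeometric. Its mean is $\mu_{a,c}=r_{k_a,n}r_{k_c,n}\,|A_{k_a,n}\cap A_{k_c,n}|/n^2\to\lambda_{k_a,k_c}$, using the condition $r/n\to0$. We aim to show that the family $(|I_a\cap I_c|)$ over pairs of blocks converges jointly to independent Poisson$(\lambda)$ variables. Then $\mathbb{E}[(-1)^{X}]\to e^{-2\lambda}$, and the product over crossings factorizes in the limit, yielding $\prod_{\mathrm{Cr}(\pi)}q_{k_a,k_c}$.

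To get this joint convergence, we will condition on one block and compute factorial moments. Conditionally on $I_a$, the counts $|I_c\cap I_a|$ for different $c$ are independent. The correlation among counts involving three distinct blocks comes only through triple intersections, whose expected size is $O(r^3/n^2)$. We need this to be $o(1)$, which requires care: in the regime $\lambda\in(0,\infty)$, $r^2\asymp n$ gives $r^3/n^2\to0$. When $\lambda_{i,j}=\infty$, we instead show directly that $\mathbb{E}[(-1)^{X}]\to0$ by a characteristic-function bound for the hypergeometric law, namely $|\mathbb{E}(-1)^X|\le e^{-c\mu}$. That single crossing factor then kills the term regardless of the others, since the remaining factors have modulus at most $1$.

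Assembling Steps 1--3 and summing over $\pi$ gives convergence of every mixed moment to $\tau_\Hilb(s_{k_1}\cdots s_{k_d})$. The parity assumption ensures that $(-1)^{r_{i,n}r_{j,n}}$ is eventually constant, so $q_{i,j}$ is well defined.
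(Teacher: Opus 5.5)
Your Steps 1 and 2, and the finite-$\lambda$ half of Step 3, follow essentially the paper's route. That means reduction to pair partitions, the sign identity $\prod_{\mathrm{crossings}}(-1)^{r_{k_a,n}r_{k_c,n}+|I_a\cap I_c|}$, negligibility of overlaps between different intersections, and factorial moments giving the Poisson limit.

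\textbf{The gap: some crossing $(a,c)$ has $\lambda_{k_a,k_c}=\infty$.} What must tend to $0$ is $\mathbb{E}[ZY]$, where $Z=(-1)^{|I_a\cap I_c|}$ and $Y$ is the product of the signs of all other crossings. $Y$ is \emph{not} independent of $Z$: every block $b$ crossing $a$ or $c$ contributes $(-1)^{|I_a\cap I_b|}$ or $(-1)^{|I_c\cap I_b|}$, which depends on $I_a$ or $I_c$. The facts $|Y|\le 1$ and $\mathbb{E}[Z]\to 0$ say nothing about $\mathbb{E}[ZY]$ (take $Y=Z$). So ``that single crossing factor then kills the term regardless of the others'' is not a valid step. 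Moreover, when $A_{k_a,n}\neq A_{k_c,n}$, $|I_a\cap I_c|$ is not hypergeometric. It is hypergeometric only conditionally on $I_a$, with random parameter $|I_a\cap A_{k_c,n}|$, so even a single crossing needs a concentration argument.

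\textbf{The missing idea: decoupling by conditioning.} This is the substance of the paper's singular-case lemma.
Put $S=\bigcup_{b\neq a,c}I_b$, and let $\mathcal{F}$ be the $\sigma$-algebra generated by $\{I_b\}_{b\neq a,c}$, $I_a\cap S$ and $I_c\cap S$.
Since $I_a\cap I_b=(I_a\cap S)\cap I_b$ for $b\neq a,c$, $Y$ is $\mathcal{F}$-measurable.
Also $|I_a\cap I_c|=|(I_a\cap S)\cap(I_c\cap S)|+|V_a\cap V_c|$, where $V_a=I_a\setminus S$ and $V_c=I_c\setminus S$ are conditionally independent and uniform on $A_{k_a,n}\setminus S$ and $A_{k_c,n}\setminus S$.
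Hence $|\mathbb{E}[ZY]|\le\mathbb{E}\bigl|\mathbb{E}[(-1)^{|V_a\cap V_c|}\mid\mathcal{F}]\bigr|$.
Only now can the hypergeometric bound be applied, conditionally on $V_a$, with $p=|V_a\cap A_{k_c,n}|$, $q=|V_c|$ and $m=|A_{k_c,n}\setminus S|\le n$.

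Because $p$ and $q$ are random, you still need $pq/m\to\infty$ in probability.
Once $r_{k,n}\le n/2$, each $x$ avoids $S$ with probability at least $2^{-N}$, where $N$ is the number of remaining blocks.
So $\mathbb{E}p\ge 2^{-N}r_{k_a,n}|A_{k_a,n}\cap A_{k_c,n}|/n$ and $\mathbb{E}q\ge 2^{-N}r_{k_c,n}$.
Both tend to $\infty$ because $\lambda_{k_a,k_c}=\infty$ and $r_{k,n}/n\to0$, and a Chebyshev bound (variance at most mean) gives the concentration.

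\textbf{A smaller point.} Do your triple-intersection estimate in the general form
$\mathbb{E}|I_a\cap I_b\cap I_c|\le\frac{r_{k_a,n}r_{k_b,n}|A_{k_a,n}\cap A_{k_b,n}|}{n^2}\cdot\frac{r_{k_c,n}}{n}\to0$,
which uses $\lambda_{k_a,k_b}<\infty$ and $r_{k_c,n}/n\to0$. The heuristic $r^2\asymp n$ holds only for equal interaction lengths with full overlap and is not implied by the hypotheses.
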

In the Feng--Tian--Wei model, $r_{k,n}$ is assumed to satisfy $r_{k,n}\leq n/2$, which allows for the possibility that $\lim\limits_{n\rightarrow \infty}\frac{r_{k,n}}{n}\neq0$. 
They restrict to  $r_{k,n} \le n/2$because replacing $r_{k,n}$ by $n-r_{k,n}$ yields an essentially equivalent model. 
However, the case $\lim_{n\to\infty} \frac{r_{k,n}}{n} \neq 0$ is not covered by our result. 
Indeed, we provide an example for which $\lim_{n\to\infty} \frac{r_{k,n}}{n} \neq 0$ for certain $k$, and the limit in \Cref{main theorem} fails.
The example is given at the end of Section 4.

Based on the above theorem, in addition to the relations we studied for SYK models with intersections from different systems, we can now construct more interesting mixed $q$-Gaussian systems.

Compare with the random models in \cite{JZ24,Sn01,Sp92,Sp93}, which are realized as random operators, the (mixed) $q$-Gaussian relations are implemented by placing a suitable probability measure on the canonical anticommutation relation (CAR) elements. 

The SYK models are closer in spirit to Gaussian ensembles, which are related to other interesting topics; for example, the strong convergence for the random matrices of the form
$X=\sum_{i=1}^n g_i A_i,$
where $A_i\in M_d(\mathbb{C})_{\mathrm{sa}}$ are deterministic $d\times d$ self-adjoint matrices and $g_i$ are i.i.d.  Gaussian random variables, were studied by Bandeira, Boedihardjo, and van Handel in \cite{BBV23}.

In noncommutative probability, free independence represents the \lq\lq highest level of noncommutativity", whereas classical independence is entirely commutative. 
The above theorem illustrates a relationship between noncommutativity and the interaction length in SYK models and quantum systems. 
Specifically, a longer interaction length in the SYK model or greater overlap among different quantum systems corresponds to increased noncommutativity. By adjusting the overlap between quantum systems, it becomes possible to realize $\varepsilon$-free relations between free semicircular elements, thereby obtaining an asymptotic $\varepsilon$-free result analogous to that in free probability~\cite{Vo91}.
Building on the bridge between free group operator algebras and random matrices established by Voiculescu, powerful tools such as free entropy were developed and have led to solutions of many longstanding problems \cite{Vo93,Vo94,Vo95,Ra92,Dy93,Ha22,HJK25}.
Our work may have further applications in operator algebras related to mixed $q$-Gaussian systems and $\varepsilon$-free group algebras.

$\varepsilon$-free independence, introduced by Młotkowski, can be viewed as a mixture of classical and free products of probability spaces. 
While free independence is realized by reduced free products and classical independence by tensor product, $\varepsilon$-free independence is realized through a universal construction---namely, graph products. 
In fact, graph products preceded the concept of $\varepsilon$-independence; they were first introduced by Green in her thesis~\cite{Gr90}.   
The concept was later applied to operator algebras by Caspers and Fima~\cite{CF17}. 
Subsequently, several random matrix models for $\varepsilon$-independence (Graph products) were introduced in~\cite{CC21, CDHJEN25, ML19}.
At the end of Morampudi and Laumann's work, they posed an open problem regarding the SYK model and $\varepsilon$-freeness~\cite{ML19}. In our work, we provide a solution to their question.

Besides this introductory section, the rest of paper is organized as follows:

In Section 2, we provide more details about the mixed $q$-Gaussian system and related properties.

In Section 3, we study $\varepsilon$-free independence.

In Section 4, we provide more background on the SYK model and outline a rough strategy for the proof of \Cref{main theorem}, including the key lemmas. Since the proofs are technical, we collect them in Section 5. At the end of Section 5, we offer some comments and further questions arising from our work.

\section{Mixed $q$-Gaussian System}
In this section, we will assume that $\Hilb$ is a Hilbert space with orthonormal basis $\{e_i\mid i\in \I\}$, where $\I$ is an index set. 
The  algebraic full Fock space of $\Hilb$ is $\mathcal{F}_{\text{alg}}(\Hilb) = \bigoplus_{n\geq0}\Hilb^{\otimes n}$, where $\Hilb^{\otimes 0} := \mathbb{C}\Omega$ for a unit vector $\Omega$ called the vacuum. 
Given a symmetric matrix $Q=(q_{i,j})_{i,j \in \I}$ such that $q_{i,j}=q_{j,i}$ and $|q_{i,j}|\leq 1$, 
one can define a deformed pre-inner product on $\mathcal{F}_{\text{alg}}(\Hilb)$ denoted $\langle \cdot, \cdot \rangle_Q$, such that 
$ \langle e_{i_1} \otimes \cdots \otimes e_{i_n}, \Omega\rangle_Q =0$ for $n\geq 1$ and 
$$
\langle e_{i_1} \otimes \cdots \otimes e_{i_m}, e_{j_1} \otimes \cdots \otimes e_{j_n}\rangle_Q = \sum_{k=1}^n \delta_{i_1, j_k} q_{i_1,j_1} \cdots q_{i_1,j_{k-1}} \langle e_{i_2}\otimes \dots \otimes e_{i_m}, e_{j_1} \otimes \cdots \otimes e_{j_{k-1}} \otimes e_{j_{k+1}} \otimes \cdots \otimes e_{j_n}\rangle_Q. 
$$
This recursive relation determines the inner product of vectors with the same tensor length, while vectors with different tensor lengths are orthogonal.  
The explicit formulas of the inner product for vectors of the same tensor length are not used in this paper and can be found in~\cite{JZ24,Lu99}. The positivity of $\langle \cdot, \cdot \rangle_Q$ was proved in~\cite{BS94}, where a more general family of inner products related to the Yang--Baxter relation was introduced. 
In addition, if $q_{i,j}>0$ for all $i,j\in \I$, then the above pre-inner product is indeed an inner product.

For each $i\in \I$,  define the left creation operator $l_i$ on  $\mathcal{F}_{\text{alg}}(\Hilb)$ by
$$ l_i\Omega = e_i, \quad l_i(e_{j_1} \otimes \cdots \otimes e_{j_n}) = e_i \otimes e_{j_1} \otimes \dots \otimes e_{j_n},  $$
and the adjoint $l_i^*$ of $l_i$, which is called the left annihilation operator satisfies 
$$ l_i^*\Omega = 0, \quad l_i^*(e_{j_1} \otimes \dots \otimes e_{j_n}) = \sum_{k=1}^n \delta_{i, j_k} q_{i ,j_1} \cdots q_{i, j_{k-1}} e_{j_1} \otimes \cdots \otimes e_{j_{k-1}} \otimes e_{j_{k+1}} \otimes \cdots \otimes e_{j_n}.$$
In this setting, the creation operators and annihilation operators satisfy the following mixed $q$-commutation relation:
$$ l_i^* l_j - q_{i,j} l_j l_i^* = \delta_{i,j}.$$

For each $i\in \I$, the $q$-Gaussian operator $s_i$ is then defined to be $l_i+l_i^*$. The family $\{s_i \mid i\in \I\}$ forms a $q$-Gaussian system with respect to the vacuum state $\tau_\Hilb(\cdot)=\langle \cdot \Omega, \Omega\rangle_Q$. Actually, we are interested in the value of  the moments $\langle s_{i_1}\cdots s_{i_m} \Omega, \Omega\rangle_Q$.

In the following, we present a formula for computing joint moments of a given mixed $q$-Gaussian system, which is used in this paper.  
To achieve this, we provide an explicit formula for $ s_{\epsilon(1)}\cdots s_{\epsilon(d)}\Omega$.
Before proceeding, we first introduce several notations and definitions.

\begin{definition}
Let $k\in \N$. We denote by $[k]=\{1,\dots,k\}$, the elements of $[k]$ are of natural order.
\begin{enumerate}
\item A  partition $\pi$ of a set $\s$ is a collection of disjoint subsets of $\s$ such that their union equals $\s$. The elements of $\pi$ are called blocks. The family of partitions of $\s$ is denoted by $\pp(\s)$.  
\item A partition whose blocks all contain exactly $2$ elements is called a pair partition. The family of pair partitions of a set $\s$ is denoted by $\pp_2(\s)$.
\item For partitions $\pi,\sigma\in \pp(\s)$, we say $\pi \leq \sigma$ if every blocks of $\pi $ is contained in a block of $\sigma$.
\item Given a sequence $(i_1,...,i_n)\in \s^n$ which defines a map $\epsilon:[n]\rightarrow \s$ such that $\epsilon(k)=i_k$, we define $\ker \epsilon=\{\epsilon^{-1}(s)\neq \emptyset \mid s\in \s\}$. 
\end{enumerate}
\end{definition}

For an ordered set $\s$,  blocks of pair partitions of $\s$ are written in order, e.g. $v=(e,z)$ with $e<z$.  
Consequently, given a pair partition $\pi\in \pp_2(\s)$, there is  a natural order on its blocks defined by $(e_1,z_1)<(e_2,z_2)$ if and only if $e_1<e_2$.  
Two pair blocks $(e_1,z_1)<(e_2,z_2)$ are said to be crossing if $e_1<e_2<z_1<z_2$.
For a fixed $d$, let $V\subset [d]$ such that $|V|=2m\leq d$ , and let 
$$\pp_{12}([d],V)=\{ \pi\cup \{ \{w\}\mid w\in V^c \} \mid \pi \in \pp_2(V) \}$$ 
be the family of partitions whose blocks consist of  pair blocks  from $V$ and singletons  from $V^c=[d]\setminus V$.  
Let $\epsilon$ be a map from $[d]$ to $\I$.  
Notice that $s_{\epsilon(1)} \cdots s_{\epsilon(d)} $ is a polynomial in annihilation and  creation operators.
The key idea in computing $s_{\epsilon(1)} \cdots s_{\epsilon(d)} \Omega $ is to move the annihilation operators next to their corresponding creation operators  by the mixed $q$-relation, leaving the remaining creation operators to act on $\Omega$. 
After identifying the elimination pairs, we record how to twist the corresponding annihilation and creation operators by the following quantities: Let $\sigma\in \pp_{12}([d],V)$. For $i,j\in \I$, we define 
 $$ C_{1}(\sigma, \epsilon; i, j) := \#\left\{ (v_1, v_2) \in \sigma \times \sigma \mid v_1 \subset \epsilon^{-1}(i), v_2\subset \epsilon^{-1}(j), v_1<v_2 \text{ and } v_1, v_2 \text{ are crossing} \right\}$$
and     
        $$C_{2}(\sigma,\epsilon; i, j) := \#\left\{ (w, v) \in V^{c} \times \sigma \mid w \in \epsilon^{-1}(i), v=(e,z) \subset \epsilon^{-1}(j), e < w < z \right\}.$$
The quantity $C_{1}(\sigma,\epsilon; i, j) $ counts the number of $q_{i,j}$ terms arising from pair blocks, while $C_{2}(\sigma,\epsilon; i, j) $ counts the number of $q_{i,j} $ terms involving a pair block and a singleton.  
Thus, the total number of  $q_{i,j}$ terms obtained for a partition $\sigma\in \pp_{12}([d],V)$ is defined to be 
   $$ C(\sigma,\epsilon; i, j) := C_{1}(\sigma,\epsilon; i, j) + C_2(\sigma,\epsilon; i, j).$$  
For   $W = [d]\setminus V = \{ w_1 < \cdots < w_{d-2m}\}$, we denote $e^{\otimes W}=e_{\epsilon(w_1)}\otimes\cdots \otimes e_{\epsilon( w_{d-2m})}$. 

\begin{proposition}\label{prop:s Omega}
Let $\epsilon:[d]\rightarrow \I$ and let $\{s_i\mid i\in \I\}$ be the $Q=(q_{i,j})$-Gaussian system. Then, we have   
    $$s_{\epsilon(1)}\cdots s_{\epsilon(d)}\Omega=\sum\limits_{m=0}^{\lfloor d/2\rfloor }\sum\limits_{\substack{V\subset[d]\\ |V|=2m}}\sum\limits_{\substack{\sigma\in \pp_{12}([d],V)\\ \sigma\leq \ker \epsilon}}\prod\limits_{i,j\in \I} q_{i,j}^{C(\sigma,\epsilon; i, j)} e^{\otimes[d]\setminus V}, $$
\end{proposition}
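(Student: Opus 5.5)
We argue by induction on $d$, peeling off the leftmost factor $s_{\epsilon(1)}=l_{\epsilon(1)}+l_{\epsilon(1)}^*$. For $d=0$ both sides equal $\Omega$: only $m=0$, $V=\emptyset$ and the empty partition occur. For $d=1$ we have $s_{\epsilon(1)}\Omega=e_{\epsilon(1)}$, which is the single term $V=\emptyset$. For the inductive step write $\epsilon'=\epsilon|_{\{2,\dots,d\}}$. By the induction hypothesis, applied to the shifted index set $\{2,\dots,d\}$ (the formula is clearly invariant under order-preserving relabelling), $s_{\epsilon(2)}\cdots s_{\epsilon(d)}\Omega$ is a sum over $V'\subset\{2,\dots,d\}$ and $\sigma'\in\pp_{12}(\{2,\dots,d\},V')$ with $\sigma'\le\ker\epsilon'$. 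Each term is $\prod q_{i,j}^{C(\sigma',\epsilon';i,j)}e^{\otimes W'}$ with $W'=\{2,\dots,d\}\setminus V'=\{w_1<\dots<w_p\}$.

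We then apply the two parts of $s_{\epsilon(1)}$ to each such term.

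\emph{The creation part.} $l_{\epsilon(1)}e^{\otimes W'}=e^{\otimes(\{1\}\cup W')}$. This term corresponds to $\sigma=\sigma'\cup\{\{1\}\}$ with $V=V'$. The new singleton $1$ lies to the left of every pair, so it is never nested inside a pair $(e,z)$ with $e<1<z$. Hence $C_2$ is unchanged, $C_1$ is unchanged, and the coefficient is preserved.

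\emph{The annihilation part.} By the defining formula of $l_i^*$,
$$l^*_{\epsilon(1)}e^{\otimes W'}=\sum_{k:\,\epsilon(w_k)=\epsilon(1)}q_{\epsilon(1),\epsilon(w_1)}\cdots q_{\epsilon(1),\epsilon(w_{k-1})}\,e^{\otimes (W'\setminus\{w_k\})}.$$
The $k$-th summand corresponds to $\sigma=\sigma'\cup\{(1,w_k)\}$ and $V=V'\cup\{1,w_k\}$. The condition $\epsilon(w_k)=\epsilon(1)$ is exactly what makes $\sigma\le\ker\epsilon$ hold. We now account for the new factors $q_{\epsilon(1),\epsilon(w_l)}$ for $l<k$. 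These are precisely the singletons $w\in W$ with $1<w<w_k$, i.e. the new contributions to $C_2(\sigma,\epsilon;\epsilon(w),\epsilon(1))$ coming from the block $(1,w_k)$. (We use $q_{i,j}=q_{j,i}$, so the order of $i,j$ in $C$ is immaterial after taking the product over all $i,j$.)

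The singleton $w_k$ also disappears. We must check how this changes the exponents recorded for $\sigma'$.

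\begin{itemize}
\item Previously $w_k$ contributed to $C_2$ through every pair $(e,z)\in\sigma'$ with $e<w_k<z$.
\item In $\sigma$ each such pair now crosses the new block $(1,w_k)$, because $1<e<w_k<z$. So these contributions move from $C_2$ to $C_1$ with the same colours, namely $(\epsilon(w_k),\epsilon(e))=(\epsilon(1),\epsilon(e))$.
\item Conversely, a pair of $\sigma'$ crossing $(1,w_k)$ must have exactly one endpoint inside $(1,w_k)$. Since $1$ is minimal, it has the form $e<w_k<z$, so it was exactly such a nesting pair.
\item No other crossings or nestings change.
\end{itemize}

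Hence $C(\sigma,\epsilon;\cdot,\cdot)$ equals the old exponents plus the new $q$ factors. Summing over both parts gives every $(V,\sigma)$ for $[d]$ exactly once: it is determined by whether $1$ is a singleton of $\sigma$ or paired with some $w_k$. For the pairing to be valid, $w_k$ must be a singleton of the restricted partition $\sigma'$.

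The main obstacle is the bookkeeping in the annihilation step. Specifically, we must make sure that the nesting contributions lost when $w_k$ stops being a singleton are exactly compensated by new crossings with $(1,w_k)$, and that the two colour indices match, using the symmetry of $Q$. The remaining points are then routine: the bijection between $\{(\sigma',\text{choice})\}$ and $\{\sigma\in\pp_{12}([d],V)\}$, and the ordering conventions $v_1<v_2$ in $C_1$.
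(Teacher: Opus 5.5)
Your proposal is correct and follows the same route as the paper. Both argue by induction, prepending a leftmost factor: the creation part gives the partitions in which the new point is a singleton, and the annihilation part pairs it with a singleton $w_k$. In that step the $q$-factors produced by $l^*$ are exactly the new $C_2$ nestings inside the block $(1,w_k)$, and the $C_2$ nestings that $w_k$ loses reappear as $C_1$ crossings with that block. Your explicit colour tracking via $q_{i,j}=q_{j,i}$ is, if anything, slightly cleaner than the paper's bookkeeping.
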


\begin{proof}
We prove the proposition by induction on $d$. 
It is obviously true for $d=1$. We assume the formula holds for $d$. For the inductive step, we consider the action of $s_{\epsilon(0)}$ on the expansion for $d$ (assuming for convenience that the new element is $0 < x$ for all $x \in [d]$). Set $X=\{0\}\cup [d]$, $W=[d]\setminus V=\{w_{1}, \cdots, w_{d-2m}\}$, $k_0=\epsilon(0)$ for convenience. By induction, we have 
$$s_{\epsilon(0)}\cdots s_{\epsilon(d)}\Omega
= \left(l_{\epsilon(0)}+l^*_{\epsilon(0)}\right)\sum\limits_{m=0}^{\lfloor d/2\rfloor }\sum\limits_{\substack{V\subset[d]\\ |V|=2m}}\sum\limits_{\substack{\sigma\in \pp_{12}([d],V)\\ \sigma\leq \ker \epsilon}}\prod\limits_{i,j\in \I} q_{i,j}^{C(\sigma,\epsilon; i, j)} e^{\otimes[d]\setminus V}.$$
To the first part of the sum we have 
\begin{align*}l_{\epsilon(0)}\sum\limits_{m=0}^{\lfloor d/2\rfloor }\sum\limits_{\substack{V\subset[d]\\ |V|=2m}}\sum\limits_{\substack{\sigma\in \pp_{12}([d],V)\\ \sigma\leq \ker \epsilon}}\prod\limits_{i,j\in \I} q_{i,j}^{C(\sigma,\epsilon; i, j)} e^{\otimes[d]\setminus V}
&=\sum\limits_{m=0}^{\lfloor d/2\rfloor }\sum\limits_{\substack{V\subset[d]\\ |V|=2m}}\sum\limits_{\substack{\sigma\in \pp_{12}([d],V)\\ \sigma\leq \ker \epsilon}}\prod\limits_{i,j\in \I} q_{i,j}^{C(\sigma,\epsilon; i, j)} e_{\epsilon(0)}\otimes e^{\otimes[d]\setminus V}\\
&=\sum\limits_{m=0}^{\lfloor d/2\rfloor }\sum\limits_{\substack{V\subset[d]\\ |V|=2m}}\sum\limits_{\substack{\sigma\in \pp_{12}(X,V)\\ \sigma\leq \ker \epsilon}}\prod\limits_{i,j\in \I} q_{i,j}^{C(\sigma,\epsilon; i, j)}  e^{\otimes X\setminus V}\\
&=\sum\limits_{m=0}^{\lfloor d/2\rfloor }\sum\limits_{\substack{V\subset X, 0\not\in V\\ |V|=2m}}\sum\limits_{\substack{\sigma\in \pp_{12}(X,V)\\ \sigma\leq \ker \epsilon}}\prod\limits_{i,j\in \I} q_{i,j}^{C(\sigma,\epsilon; i, j)}  e^{\otimes X\setminus V}.
\end{align*}

Compared to  the desired expression, the missing terms involve partitions from $\pp_{12}(X)$ where a pair block contains $0$. 
Let us now turn to the second part of the sum.
For a fixed $1\leq r\leq d-2m$, let 
    $$C_3(W,\epsilon; i,w_r)=\#\{w| w<w_r,\epsilon(w)=i\}.$$
Then, for fixed $V$ with $|V|=2m$ and $\sigma\in \pp_{12}([d],V)$, we have 
\begin{align*}
l^*_{\epsilon(0)}\prod\limits_{i,j\in \I} q_{i,j}^{C(\sigma,\epsilon; i, j)} e^{\otimes[d]\setminus V} 
&= \prod\limits_{i,j\in\I} q_{i,j}^{C(\sigma,\epsilon; i, j)}                   \sum_{r=1}^{d-2m} \delta_{k_0,\epsilon(w_r)}  \prod_{s\in \I} q_{k_0,s}^{C_3(W,\epsilon;s, w_r)}  e^{\otimes (W \setminus \{w_r\})}\\
&= \sum_{r=1}^{d-2m} \delta_{k_0,\epsilon(w_r)}  \prod_{i,j\in \I} q_{i,j}^{C(\sigma,\epsilon; i, j)+\delta_{k_0,i}\cdot C_3(W,\epsilon; j,w_r)}  e^{\otimes (W \setminus \{w_r\})}. \\ 
\end{align*}

We now need to compare the coefficients of the vector  $e^{\otimes (W \setminus \{w_r\})}$ for a fixed $V$ and $\sigma\in \pp_{12}([d],V)$.  
Notice that  the map $\sigma\rightarrow \sigma'=(\sigma\setminus\{w_r\})\cup \{(0,w_r)\}$ is a bijection from $\pp_{12}([d],V)$ to the subset of $\pp_{12}(X,V\cup\{0,w_r\})$ consisting of elements that contain the pair block $(0,w_r)$.  
For $k_0 \neq i $, the contribution of the block $(0, w_r)$ to the quantity $C(\sigma',\epsilon; i, j)$ is zero, so we have $C(\sigma',\epsilon; i, j)=C(\sigma,\epsilon; i, j)$.
If $k_0=i$, then we have 

\begin{align*}
      C_{1}(\sigma',\epsilon; i, j)&=C_{1}(\sigma,\epsilon; i, j)+ \#\{v= (e,z)\in \sigma| \epsilon(e)=\epsilon(z)=j, \text{and}\, (0,w_r),v\, \text{are crossing}\}\\
     &=C_{1}(\sigma,\epsilon; i, j)+ \#\{v=(e,z)\in\sigma| e<w_r<z,\epsilon(e)=\epsilon(z)=j\}
\end{align*}
and 
\begin{align*}
      C_{2}(\sigma',\epsilon; i, j)=C_{2}(\sigma,\epsilon; i, j)&- \#\{v=(e,z)\in\sigma| e<w_r<z,\epsilon(e)=\epsilon(z)=j\}\\
      &+\#\{w\in W |0<w<w_r,\epsilon(w)=i\}. 
\end{align*}
We have the first negative term because the singleton $\{w_r\}$ is now  part of a pair. The second positive term arises from the contribution of the pair $(0, w_r)$ and is equal to $C_3(W,\epsilon;i, w_r)$. It follows that
\begin{align*}
C(\sigma',\epsilon; i, j)&=C_{1}(\sigma',\epsilon; i, j)+C_{2}(\sigma',\epsilon; i, j)=C_{1}(\sigma,\epsilon; i, j)+C_2(\sigma,\epsilon; i, j)+C_3(W,\epsilon; i,w_r)\\
&=C(\sigma,\epsilon; i, j)+C_3(W,\epsilon; i,w_r).
\end{align*}

In summary, for  a fixed $V\subset [d]$ such that $|V|=2m$, we have that  

\begin{align*}
&l^*_{\epsilon(0)}\sum\limits_{\substack{\sigma\in \pp_{12}([d],V)\\ \sigma\leq \ker \epsilon}}\prod\limits_{i,j\in\I} q_{i,j}^{C(\sigma,\epsilon; i, j)} e^{\otimes[d]\setminus V}\\
&=\sum\limits_{\substack{\sigma\in \pp_{12}([d],V)\\ \sigma\leq \ker \epsilon}}\sum_{r=1}^{d-2m} \delta_{k_0,\epsilon(w_r)}  \prod_{i,j\in \I} q_{i,j}^{C(\sigma,\epsilon; i, j)+\delta_{k_0,i}\cdot C_3(W,\epsilon; j,w_r)}  e^{\otimes (W \setminus \{w_r\})}\\ 
&=\sum_{r=1}^{d-2m} \sum\limits_{\substack{\sigma'\in \pp_{12}(X,V\cup\{0,w_r\})\\ \sigma'\leq \ker \epsilon, (0,w_r)\in \sigma'}} \prod_{i,j\in \I} q_{i,j}^{C(\sigma',\epsilon; i, j)}  e^{\otimes (X\setminus (V\cup \{0,w_r\}))}\\
&=\sum\limits_{\substack{\sigma'\in \pp_{12}(X,V'), 0\in V'\\|V'|=2m+2\\ \sigma'\leq \ker \epsilon}} \prod_{i,j\in \I} q_{i,j}^{C(\sigma', \epsilon; i, j)}  e^{\otimes (X\setminus V')}. 
\end{align*}

Thus, we get desired coefficient for  all the terms involving partitions from $\mathcal{P}_{12}(X)$ where a pair block contains zero. 
This completes the proof by induction.
\end{proof}

By letting $V=[d]$ and $cr(\pi,\epsilon; i,j)=C_1(\pi,\epsilon; i,j)$ for $\pi\in \pp_2(d)$, we get the following formula for moments of  mixed $q$-Gaussian operators.

\begin{proposition}\label{mixed q-Gaussian moments}Let $\epsilon:[d]\rightarrow \I$ and let $\{s_i\mid i\in \I\}$ be the $Q=(q_{i,j})$-Gaussian system.  Then, we have 
$$
\tau_\Hilb(s_{\epsilon(1)}\cdots s_{\epsilon(d)})=\sum\limits_{\substack{\pi\in \pp_2(d)\\\pi\leq\ker \epsilon}} \prod\limits_{i,j\in \I} q^{cr(\pi,\epsilon; i,j)}_{i,j}.$$
\end{proposition}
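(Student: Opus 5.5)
The moment formula follows directly from \Cref{prop:s Omega} by taking the inner product with the vacuum. Since $\tau_\Hilb(s_{\epsilon(1)}\cdots s_{\epsilon(d)})=\langle s_{\epsilon(1)}\cdots s_{\epsilon(d)}\Omega,\Omega\rangle_Q$, we substitute the expansion of $s_{\epsilon(1)}\cdots s_{\epsilon(d)}\Omega$ given there and pair each term with $\Omega$.

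First, recall that vectors of different tensor lengths are orthogonal for $\langle\cdot,\cdot\rangle_Q$. In particular, $\langle e_{i_1}\otimes\cdots\otimes e_{i_k},\Omega\rangle_Q=0$ for $k\ge 1$, while $\langle\Omega,\Omega\rangle_Q=1$. The term indexed by $(m,V,\sigma)$ in \Cref{prop:s Omega} is a multiple of $e^{\otimes[d]\setminus V}$, which has tensor length $d-2m$. Hence only terms with $d-2m=0$ survive, that is, $V=[d]$ and $m=d/2$. When $d$ is odd, no such term exists, so the moment is $0$; this agrees with the right-hand side, because $\pp_2(d)=\emptyset$ in that case.

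Second, for $V=[d]$ the set $\pp_{12}([d],[d])$ is exactly $\pp_2(d)$, since there are no singletons, and $e^{\otimes\emptyset}=\Omega$. Also, $C_2(\sigma,\epsilon;i,j)=0$ because $V^c=\emptyset$. Therefore $C(\pi,\epsilon;i,j)=C_1(\pi,\epsilon;i,j)=cr(\pi,\epsilon;i,j)$, and the surviving sum is
\[
\sum_{\substack{\pi\in\pp_2(d)\\ \pi\le\ker\epsilon}}\prod_{i,j\in\I}q_{i,j}^{cr(\pi,\epsilon;i,j)}\,\langle\Omega,\Omega\rangle_Q ,
\]
which is the claimed formula.

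There is no real obstacle here; the argument is bookkeeping. The one point needing care is that the condition $\sigma\le\ker\epsilon$ and the counting $C_1$ carry over verbatim when $V=[d]$. The counting is well defined because $\pi\le\ker\epsilon$ places each pair block inside a single fiber $\epsilon^{-1}(i)$.
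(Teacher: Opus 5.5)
Your proposal is correct and follows the paper's route: the paper obtains this formula by specializing \Cref{prop:s Omega} to the vacuum component $V=[d]$, where $C_2$ vanishes and $C=C_1=cr$. You additionally spell out two points the paper leaves implicit: vectors of different tensor lengths are orthogonal, and the odd-$d$ case gives $0$.
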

When $d$ is an odd number, we always have $\tau_\Hilb(s_{\epsilon(1)}\cdots s_{\epsilon(d)})=0$.

In the following, we derive a characterization of $q$-Gaussian systems arising from random matrix models. 
A similar result for mixed $q$-Gaussian systems appears in Section 4.

For $(d_n)_{n\in \N}\subset \N$ such that $\lim\limits_{n\rightarrow \infty} d_n=\infty$,  assume that $\K_n$ is a finite set with $\lim\limits_{n\rightarrow \infty} |\K_n|=\infty$ and suppose that we have a sequence $(e^{(n)}_{k})_{k\in \K_n}\subset M_{d_n}(\CC)$ such that 
\begin{itemize}
\item $(e^{(n)}_{k})_{k\in \K_n}$ are selfadjoint.
\item $(e^{(n)}_{k})_{k\in \K_n}$ are orthonormal with respect to $tr_{d_n}(\cdot)=\frac{1}{d_n}Tr_{d_n}(\cdot)$, namely $ tr_{d_n}(e^{(n)}_{k}e^{(n)}_{j})=\delta_{k,j}$.
\item $\|e^{(n)}_{k}\|\leq 1$ for all $k\in \K_n$.
\end{itemize}
Then, for $m\geq 2$ we have that 
$$
\begin{aligned}
|tr_{d_n}( e^{(n)}_{k_1}\cdots e^{(n)}_{k_m})|^2 &\leq |tr_{d_n}( e^{(n)}_{k_1}\cdots e^{(n)}_{k_{m-1}}e^{(n)}_{k_{m-1}}\cdots e^{(n)}_{k_{1}})||tr_{d_n}( e^{(n)}_{k_m}e^{(n)}_{k_m})|\\
&\leq |tr_{d_n}( e^{(n)}_{k_1}e^{(n)}_{k_{1}})||tr_{d_n}( e^{(n)}_{k_m}e^{(n)}_{k_m})|\\
&\leq 1.
\end{aligned}
$$
where the first inequality follows from the Cauchy-Schwarz inequality and the second inequality holds since $\|e^{(n)}_{k}\|\leq 1$ for all $k\in \K_n$.

Let $\{J_{i,k,n}\mid i\in \I, k\in \K_n\}$ be identically distributed random variables with mean zero, variance one and finite $l$-th moments $M_l$ for all $l\in \N$.
Let $A_{i}^{(n)}=\frac{1}{\sqrt{|\K_n|}}\sum\limits_{k\in \K_n} J_{i,k,n}e^{(n)}_{k}$. 
Then, for each $n\in\N$, $\{ A_{i}^{(n)}|i\in\I\}$ is a family of  independent random matrices.
Assume that for all $m\in \N$ and  $\epsilon:[m]\rightarrow \I$, the following limit exists
$$\lim\limits_{n\rightarrow \infty} \E\left[tr_{d_n}( A_{\epsilon(1)}^{(n)}\cdots A_{\epsilon(m)}^{(n)})\right].$$
Then
$$\E \left[tr_{d_n}(A_{i}^{(n)}) \right]=0,\quad \E \left[tr_{d_n}\left(\left(A_{i}^{(n)}\right)^2 \right)\right]=1.$$
Thus, given $m\in \N$ and  $\epsilon:[m]\rightarrow \I$, we have that 
$$
\begin{aligned}
\E\left[tr_{d_n}\left( A_{\epsilon(1)}^{(n)}\cdots A_{\epsilon(m)}^{(n)}\right)\right]=\frac{1}{|\K_n|^{m/2}}\sum\limits_{k_1,...,k_m\in\K_n}\E[J_{\epsilon(1),k_1,n}\cdots J_{\epsilon(m),k_m,n} ] tr_{d_n}(e_{k_1}^{(n)}\cdots e_{k_m}^{(n)}).
\end{aligned}
$$

Let  $\kk=(k_1,..,k_m) $. Then 
$$
\begin{aligned}
\sum\limits_{\kk\in\K_n^{m}}=\sum\limits_{\ker \kk \in P_2(m)}+\sum\limits_{\ker \kk \not\in P_2(m)}.
\end{aligned}
$$

For each tuple $\kk$ such that $\ker \kk \notin \pp_2(m)$, if $\kk$ has a singleton, then $$\E[J_{\epsilon(1),k_1,n}\cdots J_{\epsilon(m),k_m,n} ] tr_{d_n}(e_{k_1}^{(n)}\cdots e_{k_m}^{(n)})=0.$$ 
Therefore, the nonzero terms in the second summation have the property that $\ker \kk$ contains a block with at least three elements, and all the other blocks have sizes greater than or equal to $2$.
Let $$a_m=\max\{ M_{l_1}\cdots M_{l_\iota} \mid l_1+\cdots+l_\iota=m, l_1,...l_\iota \geq 2\}$$ and 
$$\pp_{2,3}(m)=\{\pi\in \pp (m)\mid |V|\geq 2,\, \forall V\in \pi;\, \exists V\in \pi\, s.t.\, |V|\geq 3 \}.$$
Then 
 $$
\begin{aligned}
&\left|\sum\limits_{\ker \kk \not\in \pp_2(m)}\E[J_{\epsilon(1),k_1,n}\cdots J_{\epsilon(m),k_m,n} ] tr_{d_n}(e_{k_1}^{(n)}\cdots e_{k_m}^{(n)})\right|\\
\leq& \sum\limits_{\ker \kk \in \pp_{2,3}(m)}|\E[J_{\epsilon(1),k_1,n}\cdots J_{\epsilon(m),k_m,n} ] tr_{d_n}(e_{k_1}^{(n)}\cdots e_{k_m}^{(n)})|
\leq& \sum\limits_{\ker \kk \in \pp_{2,3}(m)} a_m.
\end{aligned}
$$

For each $\pi\in \pp_{2,3}(m)$, we have $$\#\{\kk\in \K_n^m|\ker\kk=\pi\}\leq |\K_n|^{\frac{m-1}{2}}.$$ 
Therefore, we have that 
$$\left|\sum\limits_{\ker \kk \not\in \pp_2(m)}\E[J_{\epsilon(1),k_1,n}\cdots J_{\epsilon(m),k_m,n} ] tr_{d_n}(e_{k_1}^{(n)}\cdots e_{k_m}^{(n)})\right|\leq  a_m|\pp_{2,3}(m)| |\K_n|^{\frac{m-1}{2}}.$$
It follows that 
$$\lim\limits_{n\rightarrow \infty} \frac{1}{|\K_n|^{m/2}}\sum\limits_{\ker \kk \not\in P_2(m)}\E[J_{\epsilon(1),k_1,n}\cdots J_{\epsilon(m),k_m,n} ] tr_{d_n}(e_{k_1}^{(n)}\cdots e_{k_m}^{(n)})=0.$$

Thus, we have 
$$
\begin{aligned}
\lim\limits_{n\rightarrow \infty}\E\left[tr_{d_n}\left( A_{\epsilon(1)}^{(n)}\cdots A_{\epsilon(m)}^{(n)}\right)\right]=\lim\limits_{n\rightarrow \infty}\frac{1}{|\K_n|^{m/2}}\sum\limits_{\ker \kk\in P_2(m)}\E[J_{\epsilon(1),k_1,n}\cdots J_{\epsilon(m),k_m,n} ] tr_{d_n}(e_{k_1}^{(n)}\cdots e_{k_m}^{(n)}).
\end{aligned}
$$

Notice that, for $\kk=(k_1,...,k_m)$ such that $\ker \kk$ is a pair partition,  $\E[J_{\epsilon(1),k_1,n}\cdots J_{\epsilon(m),k_m,n} ]\neq 0$ only if $\ker\kk\leq \ker \epsilon$. Therefore, we have that

\begin{equation}\label{limiting distribution}
\lim\limits_{n\rightarrow \infty}\E\left[ tr_{d_n} \left( A_{\epsilon(1)}^{(n)}\cdots A_{\epsilon(m)}^{(n)}\right)\right] =\lim\limits_{n\rightarrow \infty}\sum\limits_{\substack{\ker \kk\in P_2(m)\\ \ker\kk\leq \ker \epsilon}} \frac{1}{|\K_n|^{m/2}}\E[J_{\epsilon(1),k_1,n}\cdots J_{\epsilon(m),k_m,n} ] tr_{d_n}(e_{k_1}^{(n)}\cdots e_{k_m}^{(n)}).
\end{equation}

The above limit is nonzero only if $m$ is an even number. 
For this model, we have the following proposition.
\begin{proposition} Let $\I$ be an infinite index set. 
Assume that for all $d\in \N$ and $\epsilon:[2d]\to \I$ such that $\pi=\ker \epsilon \in \pp_2(2d)$, we have that 
$$\lim\limits_{n\rightarrow \infty}\E\left[ tr_{d_n}\left( A_{\epsilon(1)}^{(n)}\cdots A_{\epsilon(2d)}^{(n)}\right)\right]= q^{cr(\pi)}$$
Then, $\{A_{i}^{(n)}\mid i\in \I\}$ converges to the $q$-Gaussian system $\{s_i\mid i\in\I\}$ in distribution.
\end{proposition}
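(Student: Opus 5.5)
Using \eqref{limiting distribution}, it suffices to show that for every $m$ and every $\epsilon:[m]\to\I$ the right-hand side of \eqref{limiting distribution} equals $\sum_{\pi\in\pp_2(m),\,\pi\le\ker\epsilon} q^{cr(\pi)}$. This is the value of $\tau_\Hilb(s_{\epsilon(1)}\cdots s_{\epsilon(m)})$ given by \Cref{mixed q-Gaussian moments}, where all $q_{i,j}=q$, so that $\prod_{i,j}q_{i,j}^{cr(\pi,\epsilon;i,j)}=q^{cr(\pi)}$. For odd $m$ both sides vanish. So let $m=2d$.

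First, fix $\pi\in\pp_2(2d)$ with $\pi\le\ker\epsilon$. Then split the sum in \eqref{limiting distribution} according to $\ker\kk=\pi$. For such $\kk$, each pair of $\pi$ carries equal indices $k$ and equal $\epsilon$-labels, and distinct pairs carry distinct $k$'s. By independence, $\E[J_{\epsilon(1),k_1,n}\cdots J_{\epsilon(2d),k_{2d},n}]=\prod_{\text{pairs}}\E[J^2]=1$. Hence the contribution of $\pi$ is
$$\frac{1}{|\K_n|^{d}}\sum_{\ker\kk=\pi}tr_{d_n}(e^{(n)}_{k_1}\cdots e^{(n)}_{k_{2d}}).$$
The key observation is that this expression does not depend on $\epsilon$ at all, only on $\pi$. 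The random variables only served to force $\ker\kk$ to be a pair partition.

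Now compare with the hypothesis. Since $\I$ is infinite, choose $\epsilon_\pi:[2d]\to\I$ with $\ker\epsilon_\pi=\pi$ exactly, using distinct labels for distinct pairs. Applying \eqref{limiting distribution} to $\epsilon_\pi$, the only pair partition $\sigma\le\ker\epsilon_\pi=\pi$ is $\sigma=\pi$ itself. So the hypothesis says precisely that
$$\lim_{n\to\infty}\frac{1}{|\K_n|^{d}}\sum_{\ker\kk=\pi}tr_{d_n}(e^{(n)}_{k_1}\cdots e^{(n)}_{k_{2d}})=q^{cr(\pi)}.$$
In particular, this limit exists for each $\pi$ separately. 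For a general $\epsilon$, the right-hand side of \eqref{limiting distribution} is a finite sum over $\pi\le\ker\epsilon$ of these same quantities. The limit therefore exists and equals $\sum_{\pi\le\ker\epsilon}q^{cr(\pi)}$, which is the claim.

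The only delicate point is justifying that the limit in \eqref{limiting distribution} may be taken term by term over $\pi$. This is handled by the per-$\pi$ identification above: each term has its own limit, and the sum is finite. The earlier reduction (negligibility of $\ker\kk\notin\pp_2$) was already established before the statement. I expect no serious obstacle. The main care needed is noticing that infinitude of $\I$ is exactly what allows realizing every $\pi$ as $\ker\epsilon_\pi$.
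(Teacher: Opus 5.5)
Your proposal is correct and follows essentially the same route as the paper. You split the pair-partition sum in \eqref{limiting distribution} according to $\ker\kk=\pi$, use that $\I$ is infinite to choose $\epsilon_\pi$ with $\ker\epsilon_\pi=\pi$, note that the $J$-expectations for $\epsilon$ and $\epsilon_\pi$ coincide (you compute them to be exactly $1$, whereas the paper only equates them), and read off each term's limit $q^{cr(\pi)}$ from the hypothesis; your remark that each per-$\pi$ term has its own limit justifies the interchange of limit and finite sum, which the paper performs without comment.
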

\begin{proof}
According to the definition of $q$-Gaussian systems, it  suffices to show that 
$$\lim\limits_{n\rightarrow \infty}\E\left[ tr_{d_n}\left( A_{\epsilon(1)}^{(n)}\cdots A_{\epsilon(m)}^{(n)}\right)\right]=\sum\limits_{\substack{\pi\in \pp_2(m), \pi\leq \ker \epsilon}}  q^{cr(\pi)},$$
for all $m\in \N$ and $\epsilon: [m]\rightarrow \I$.

By \eqref{limiting distribution},  the above statement is clearly true for odd $m$.  When $m$ is even, we have  
$$
\begin{aligned}
&\lim\limits_{n\rightarrow \infty}\E\left[ tr_{d_n} \left( A_{\epsilon(1)}^{(n)}\cdots A_{\epsilon(m)}^{(n)}\right)\right]\\
&=\lim\limits_{n\rightarrow \infty}\sum\limits_{\substack{\ker \kk\in P_2(m)\\ \ker\kk\leq \ker \epsilon}} \frac{1}{|\K_n|^{m/2}}\E[J_{\epsilon(1),k_1,n}\cdots J_{\epsilon(m),k_m,n} ] tr_{d_n}(e_{k_1}^{(n)}\cdots e_{k_m}^{(n)})\\
&=\lim\limits_{n\rightarrow \infty}\sum\limits_{\substack{\pi \in P_2(m)\\ \pi\leq \ker \epsilon}} \sum\limits_{\ker \kk=\pi} \frac{1}{|\K_n|^{m/2}}\E[J_{\epsilon(1),k_1,n}\cdots J_{\epsilon(m),k_m,n} ] tr_{d_n}(e_{k_1}^{(n)}\cdots e_{k_m}^{(n)})\\
&=\sum\limits_{\substack{\pi \in P_2(m)\\ \pi\leq \ker \epsilon}} \lim\limits_{n\rightarrow \infty}\sum\limits_{\ker \kk=\pi} \frac{1}{|\K_n|^{m/2}}\E[J_{\epsilon(1),k_1,n}\cdots J_{\epsilon(m),k_m,n} ] tr_{d_n}(e_{k_1}^{(n)}\cdots e_{k_m}^{(n)}).
\end{aligned}
$$

On the other hand, for each $\pi\in \pp_2(m)$ such that $\pi\leq \ker\epsilon$, we can define a map $\epsilon_\pi:[m]\rightarrow \I$ such that $\ker \epsilon_\pi=\pi$.  
Then we have 
$$\E[J_{\epsilon(1),k_1,n}\cdots J_{\epsilon(m),k_m,n} ] =\E[J_{\epsilon_\pi(1),k_1,n}\cdots J_{\epsilon_\pi(m),k_m,n} ] $$

By assumption, we have 
$$
\begin{aligned}
&\lim\limits_{n\rightarrow \infty}\sum\limits_{\ker \kk=\pi} \frac{1}{|\K_n|^{m/2}}\E[J_{\epsilon_\pi(1),k_1,n}\cdots J_{\epsilon_\pi(m),k_m,n} ] tr_{d_n}(e_{k_1}^{(n)}\cdots e_{k_m}^{(n)})\\
=& \lim\limits_{n\rightarrow \infty}\E\left[ tr_{d_n}\left( A_{\epsilon_\pi(1)}^{(n)}\cdots A_{\epsilon_\pi(m)}^{(n)}\right)\right]=q^{cr(\pi)}.
\end{aligned}
$$

The statement follows.
\end{proof}

Building on the previous proposition, to show that $\{A_i^{(n)} \mid i \in \I\}$ converges in distribution to the $q$-Gaussian system $\{s_i \mid i \in \I\}$, it suffices to verify the case of pair partitions, which is typically addressed in the first step for this kind of problem. 
As a result, Theorem 3.1 in~\cite{PS22} follows straightforwardly from Equation (20) in~\cite{FTW19}. Indeed, to establish a limiting mixed $q$-Gaussian distribution for the SYK model, it suffices to verify the limiting moments corresponding to pair partitions.

\section{$\varepsilon$-free independence}

In this section, we introduce the notion of $\varepsilon$-free independence and show its relation to mixed $q$-Gaussian systems.

Recall that a noncommutative probability space, $(\A, \phi)$, consists of a unital  algebra $\A$ over a field $\mathbb{K}$ and  a unital linear functional $\phi: \A \rightarrow \mathbb{K}$  such that $\phi\left(1_\A\right)=1$.
Elements of $\A$ are called random variables.
According to~\cite{Sp97}, there are exactly two unital, universal, symmetric independence relations for collections of random variables from $\A$, applied to the subalgebras they generate: classical independence and free independence.
Free independence requires a noncommutative framework; in a commutative setting, all random variables are constant except at most one.
A family of subalgebras $\A_i$ of $\A$ is said to be free if 
 $$\phi(a_1\cdots a_n)=0 $$
 whenever $a_k\in \A_{i_k}$, $i_1\neq i_2\cdots \neq i_n$ and $\phi(a_k)=0$ for all $k$.
 Readers are referred to~\cite{NS, VDN92} for background and more details on free probability.
Classical independence is defined for commutative random variables, and has a definition analogous to that of freeness.
A family of subalgebras $\A_i$ of $\A$, such that $\A_i, \A_j$ commute for all $i\neq j$, is said to be classically independent if 
 $$\phi(a_1\cdots a_n)=0 $$
 whenever $a_k\in \A_{i_k}$, $i_k$ are pairwise distinct and $\phi(a_k)=0$ for all $k$.  We do not require $\A_i$ to be a commutative algebra. 
 
M\l otkowski~\cite{Ml04} introduced $\varepsilon$-free independence as a generalization of classical and free products of probability spaces.
This concept has been further developed in~\cite{EPS18,SW16}. 

Let $\I$ be a finite or infinite index set, and let $\varepsilon_{i,j}=\varepsilon_{j,i}\in\{0,1\}$ for $i,j\in\I$.
The subset $\I_\varepsilon^n \subset \I^{n}$  is defined to consist of $n$-tuples $\mathbf{i}=(i(1), \ldots, i(n))$ satisfying: if $i(k)=i(l)$ for $1 \leq k < l \leq n$, then there exists a $k<p<l$ such that $i(k) \neq i(p)$ and $\varepsilon_{i(p),i(k)}=0$. 
In the $\varepsilon$-independence relation, the diagonal elements do not play an important role at this moment.
\begin{definition}\label{epsilon independence}
Let $(\A, \phi)$ be a noncommutative probability space. The subalgebras $\A_i \subset \A, i \in \I$, are $\varepsilon$-free independent if and only if
\begin{itemize}
\item[(i)] the algebras $\A_i$ and $\A_j$ commute for all $i \neq j$ for which $\varepsilon_{i,j}=1$,
\item[(ii)] for $(i(1), \ldots, i(n)) \in \I_\varepsilon^n$ and $\left(a_1, \ldots, a_n\right) \in \A_{i(1)} \times \cdots \times \A_{i(n)}$, such that $\phi\left(a_k\right)=0$ for all $1 \leq k \leq n$, it follows that $\phi\left(a_1 \cdots a_n\right)=0$.
\end{itemize}
\end{definition}

\subsection{Mixed q-Gaussian system and $\varepsilon$-free independence}

In the  following, we assume that  $l_i^{*}l_j - q_{i,j} l_j l_i^* = \delta_{i,j} \cdot \mathbf{1}$ for $i,j\in \I$.
\begin{lemma} \label{q_ij=1 implies commuting relation} For $i\neq j$, if $q_{i,j}=1$, then $l_il_j=l_jl_i$.
\end{lemma}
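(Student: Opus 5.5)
Since $q_{i,j}=1$ makes the sesquilinear form $\langle\cdot,\cdot\rangle_Q$ possibly degenerate, I will show that $l_il_j-l_jl_i$ is a null operator: it maps every vector into the kernel of the form, so it vanishes on the Fock space obtained by quotienting out null vectors (the GNS space where the $Q$-system lives). It suffices to show that for every basis tensor $\xi=e_{j_1}\otimes\cdots\otimes e_{j_n}$, the vector $\eta=(l_il_j-l_jl_i)\xi=e_i\otimes e_j\otimes\xi-e_j\otimes e_i\otimes\xi$ satisfies $\langle \eta,\zeta\rangle_Q=0$ for every basis tensor $\zeta$. Vectors of different tensor length are orthogonal, so only $\zeta$ of length $n+2$ matters.

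The plan is to use the recursion defining $\langle\cdot,\cdot\rangle_Q$, which expands in the \emph{first} tensor factor of the left argument. Equivalently, $\langle e_k\otimes x,\zeta\rangle_Q=\langle x,l_k^*\zeta\rangle_Q$, so
$$\langle \eta,\zeta\rangle_Q=\langle \xi,(l_j^*l_i^*-l_i^*l_j^*)\zeta\rangle_Q .$$
Thus the key step is to prove $l_i^*l_j^*=l_j^*l_i^*$ on basis tensors whenever $i\neq j$ and $q_{i,j}=1$. I will compute $l_j^*l_i^*\zeta$ directly from the explicit annihilation formula. For $\zeta=e_{k_1}\otimes\cdots\otimes e_{k_{n+2}}$, the vector $l_j^*l_i^*\zeta$ is a sum over ordered pairs of positions $a\neq b$ with $k_a=i$ and $k_b=j$. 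The term for such a pair is the tensor with positions $a,b$ deleted, with coefficient $\prod_{p<a}q_{i,k_p}$ times $\prod_{p<b,\,p\neq a}q_{j,k_p}$.

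The same pair of positions appears in $l_i^*l_j^*\zeta$ with coefficient $\prod_{p<b}q_{j,k_p}\cdot\prod_{p<a,\,p\neq b}q_{i,k_p}$. The two coefficients differ by at most one factor. If $a<b$, the first coefficient contains $q_{j,k_a}=q_{j,i}$ and the second does not. If $b<a$, the second contains $q_{i,k_b}=q_{i,j}$ and the first does not. Every other factor is shared. Since $q_{i,j}=q_{j,i}=1$, the coefficients agree, so $l_i^*l_j^*=l_j^*l_i^*$ on the algebraic Fock space. It follows that $\langle\eta,\zeta\rangle_Q=0$ for all $\zeta$, so $\eta$ is null and $l_il_j=l_jl_i$ on the completed or quotient space.

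The main obstacle is bookkeeping rather than conceptual. One must write the double-deletion coefficient carefully, in particular how deleting position $a$ changes which factors precede position $b$, to confirm that exactly one factor $q_{i,j}$ is unmatched. One must also state clearly that the equality holds modulo the null space of $\langle\cdot,\cdot\rangle_Q$ and not literally on $\mathcal{F}_{\text{alg}}(\Hilb)$. On the algebraic Fock space itself, $e_i\otimes e_j\neq e_j\otimes e_i$, so the identity can only be meant on the Hilbert space completion after quotienting.
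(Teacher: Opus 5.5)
Your proof is correct, but it takes a different route from the paper's. The paper reduces the lemma to showing $e_i\otimes e_j=e_j\otimes e_i$ modulo null vectors, which is the statement $l_il_j\Omega=l_jl_i\Omega$. It checks that $e_i\otimes e_j-e_j\otimes e_i$ is orthogonal to the length-two basis tensors. The only nontrivial identity there is $\langle e_i\otimes e_j,e_i\otimes e_j\rangle_Q=\langle e_j\otimes e_i,e_i\otimes e_j\rangle_Q$, which the paper gets from the vacuum computation $\langle l_i^*l_j^*l_il_j\Omega,\Omega\rangle=1$ using $l_j^*l_i=l_il_j^*$ (the $q_{i,j}=1$ case of the mixed relation).

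You instead dualize. Using $\langle e_k\otimes x,\zeta\rangle_Q=\langle x,l_k^*\zeta\rangle_Q$, which is literally the defining recursion, you reduce to $l_i^*l_j^*=l_j^*l_i^*$. You then prove this exactly on $\mathcal{F}_{\text{alg}}(\Hilb)$ from the explicit annihilation formula.

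The paper's argument is shorter and needs no bookkeeping, but as written it only treats the vacuum. The step from there to $e_i\otimes e_j\otimes\xi=e_j\otimes e_i\otimes\xi$ for arbitrary $\xi$ is left implicit and is not automatic. Your argument covers every $\xi$ at once, so it actually yields the operator identity on the quotient. It also gives the stronger intermediate fact that the annihilators commute on the nose, not just modulo null vectors. Descent of $l_k$ to the quotient follows from the same adjoint identity, since $l_k$ maps null vectors to null vectors.

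One small slip: in your coefficient comparison the roles are reversed. When $a<b$, it is the coefficient of $l_i^*l_j^*\zeta$ (the one containing $\prod_{p<b}q_{j,k_p}$) that carries the extra factor $q_{j,k_a}=q_{j,i}$. When $b<a$, it is the coefficient of $l_j^*l_i^*\zeta$ that carries $q_{i,k_b}=q_{i,j}$. For example, with $\zeta=e_i\otimes e_j$ one gets $l_j^*l_i^*\zeta=\Omega$ but $l_i^*l_j^*\zeta=q_{i,j}\Omega$. Since the two coefficients still differ by exactly one factor $q_{i,j}=1$, your conclusion is unaffected.
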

\begin{proof}
It suffices to show that $e_i\otimes e_j=e_j\otimes e_i$.  
Let $v=e_{i_1}\otimes \cdots \otimes e_{i_n}$ for $i_1,\cdots, i_n\in \I$.
If $n\neq 2$, we have $\langle e_i\otimes e_j,v\rangle=\langle e_j\otimes e_i,v\rangle=0$
For $n=2$, if $(i_1,i_2) \neq (i,j), (j,i) $, we also have $\langle e_i\otimes e_j,v\rangle=\langle e_j\otimes e_i,v\rangle=0$.
Therefore, we only need to show  that $\langle e_i\otimes e_j,e_i\otimes e_j\rangle=\langle e_j\otimes e_i,e_i\otimes e_j\rangle$, which is equivalent to $\langle l_i^*l_j^*l_il_j\Omega,\Omega\rangle=1$. Notice that $q_{i,j}=1$, we have $l_il^*_j=l_j^*l_i$ and $l_jl^*_i=l_i^*l_j$. The statement follows.
\end{proof} 

\begin{proposition}
Assume that $q_{i,j}\in\{0,1\}$ for $i\neq j$ and $q_{i,i}\in [-1,1]$, and let $\A$ be the unital *-algebra generated by $\{ l_i\mid i\in \I\}$, $\tau_\Hilb$ be the vacuum state on $\A$ and $\A_i$ be the unital *-algebra generated by $l_i$. Let $\varepsilon_{i,j}=q_{i,j}$. Then, $\A_i$'s are $\varepsilon$-free in $(\A,\tau_\Hilb)$.
\end{proposition}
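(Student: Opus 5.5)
The plan is to verify the two conditions of Definition~\ref{epsilon independence} separately: condition (i) comes from Lemma~\ref{q_ij=1 implies commuting relation} and the defining relation, and condition (ii) comes from a combinatorial analysis of how words act on the vacuum in the Fock space. We read $\A_i$ as the unital $*$-algebra generated by $l_i$, so it contains both $l_i$ and $l_i^*$.

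\emph{Step 1 (commutation).} Let $i\neq j$ with $\varepsilon_{i,j}=q_{i,j}=1$. Lemma~\ref{q_ij=1 implies commuting relation} gives $l_il_j=l_jl_i$, and taking adjoints gives $l_i^*l_j^*=l_j^*l_i^*$. The relation $l_i^*l_j-q_{i,j}l_jl_i^*=\delta_{i,j}$ gives $l_i^*l_j=l_jl_i^*$, and its adjoint gives $l_j^*l_i=l_il_j^*$. So all four generators of $\A_i$ commute with all four generators of $\A_j$, and $\A_i$, $\A_j$ commute.

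\emph{Step 2 (normal form of centered elements).} We use $l_i^*l_i=1+q_{i,i}l_il_i^*$ to rewrite every word in $l_i,l_i^*$ in Wick order. Hence $\A_i$ is spanned by the monomials $l_i^a(l_i^*)^b$ with $a,b\ge 0$. Since $l_i^*\Omega=0$, we get $\tau_\Hilb(l_i^a(l_i^*)^b)=\delta_{a,0}\delta_{b,0}$. Consequently every centered $a\in\A_i$ is a linear combination of monomials with $(a,b)\neq(0,0)$. By multilinearity it suffices to prove $\tau_\Hilb(a_1\cdots a_n)=0$ when each $a_k=l_{i(k)}^{\alpha_k}(l_{i(k)}^*)^{\beta_k}$ with $(\alpha_k,\beta_k)\ne(0,0)$ and $(i(1),\dots,i(n))\in\I_\varepsilon^n$.

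\emph{Step 3 (invariant on vectors).} We compute $a_k a_{k+1}\cdots a_n\Omega$ by descending induction on $k$. Expanding produces a combination of simple tensors $e_{j_1}\otimes\cdots\otimes e_{j_m}$ in which each letter is labelled by the factor $a_l$ that created it. When $l_i^*$ acts, the annihilation formula contributes only from a position $p$ with $j_p=i$ and $\prod_{s<p}q_{i,j_s}\neq 0$. Every letter to the left of position $p$ therefore has index $j_s$ with $q_{i,j_s}=1$, or equal to $i$; for $j_s\neq i$ this means $\varepsilon_{i,j_s}=1$.

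The invariant to carry is the following. In every surviving simple tensor of $a_k\cdots a_n\Omega$, a letter created by $a_l$ that can be reached from the left through such a chain of commuting letters, all of which were created by factors $a_p$ with $k\le p<l$, has an index that is excluded by the $\I_\varepsilon$ condition from coinciding with $i(k-1)$.

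Now let $a_{k-1}$ act. If $\beta_{k-1}\ge1$, its annihilators would have to reach a letter $e_{i(k-1)}$ created by some $a_l$ with $l\ge k$. All letters standing between them come from factors $a_p$ with $k-1<p<l$, and each of these indices either commutes with $i(k-1)$ or equals $i(k-1)$. The definition of $\I_\varepsilon^n$ requires some index strictly between positions $k-1$ and $l$ that differs from $i(k-1)$ and does not commute with it. Tracking this blocking letter shows that such a term vanishes, so $\beta_{k-1}\ge1$ forces that term to be $0$. If $\beta_{k-1}=0$, then $\alpha_{k-1}\ge1$ and the factor adds at least one letter. In either case every nonzero surviving term has tensor length at least $1$.

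Taking $k=1$, the vector $a_1\cdots a_n\Omega$ is orthogonal to $\Omega$, because vectors of different tensor lengths are orthogonal. Hence $\tau_\Hilb(a_1\cdots a_n)=0$.

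\emph{Main obstacle.} The delicate point is the bookkeeping in Step 3. The blocking letter with $\varepsilon=0$ guaranteed by $\I_\varepsilon^n$ may itself have been annihilated by an intermediate factor, which would expose a deeper $e_{i(k-1)}$. We also need to confirm that the $q_{i,i}$ factors from equal letters do not rescue a term. To handle this, the first thing I would try is to strengthen the induction to work with reduced words in the graph-product sense. The hypothesis would be that the surviving labels form a word that is reduced with respect to $\varepsilon$, in the style of Green's normal form. The key point would then be that an annihilator from $\A_{i(k-1)}$ can only cancel against a letter that is adjacent to it after commuting moves, and this is exactly what $\I_\varepsilon^n$ forbids.
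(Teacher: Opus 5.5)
Your Steps 1 and 2 are correct and agree with the paper. The paper likewise Wick-orders each centered $a_k$ and notes that the $(0,0)$ coefficient vanishes; your reduction to single monomials by multilinearity is a harmless variant. The gap is Step 3, which carries the entire content of condition (ii). As written it is not a proof, as you acknowledge yourself. The invariant you state does not close under the induction. It is phrased relative to the single index $i(k-1)$, so knowing it at stage $k$ gives nothing about $i(k-2)$, which is what the next stage needs. Also, ``excluded by the $\I_\varepsilon$ condition from coinciding with $i(k-1)$'' is not what $\I_\varepsilon^n$ says: the condition does not forbid $i(l)=i(k-1)$, it only guarantees a distinct, non-commuting index strictly between them. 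That is why you are left worrying that an intermediate factor might have annihilated the blocking letter. Green's reduced-word normal form would be far more machinery than this needs.

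The missing idea is to prove a much stronger statement by descending induction, namely that no annihilator ever acts nontrivially. Take monomials $a_j=l_{i(j)}^{\alpha_j}(l_{i(j)}^*)^{\beta_j}$ with $(\alpha_j,\beta_j)\neq(0,0)$. Claim: $a_k\cdots a_n\Omega=0$ if $\beta_j\geq 1$ for some $j\geq k$, and otherwise it equals exactly $e_{i(k)}^{\otimes\alpha_k}\otimes\cdots\otimes e_{i(n)}^{\otimes\alpha_n}$ with every $\alpha_j\geq 1$. For the inductive step, suppose the suffix is nonzero and $\beta_{k-1}\geq 1$.
\begin{enumerate}
\item If $i(k-1)$ does not occur among $i(k),\dots,i(n)$, then $l^*_{i(k-1)}$ gives $0$.
\item Otherwise, let $l$ be the first occurrence of $i(k-1)$. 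The condition $\I_\varepsilon^n$ gives $p$ with $k-1<p<l$, $i(p)\neq i(k-1)$ and $q_{i(k-1),i(p)}=\varepsilon_{i(k-1),i(p)}=0$.
\item The block $e_{i(p)}^{\otimes\alpha_p}$ is nonempty and lies to the left of every letter $e_{i(k-1)}$. Hence every term of the annihilation formula carries the factor $q_{i(k-1),i(p)}=0$.
\end{enumerate}
So the blocking letter can never have been removed, because any intermediate factor with an annihilator already produced $0$. The $q_{i,i}$ factors are irrelevant, since a product containing a zero factor vanishes. Tensor-length orthogonality then gives $\tau_\Hilb(a_1\cdots a_n)=0$, as you say. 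This is exactly the paper's argument, which it phrases as $a_1\cdots a_d\Omega=v_1\otimes\cdots\otimes v_d$ with $v_k=a_k\Omega$.
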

\begin{proof}
(1)  For $i\neq j$ such that $q_{i,j}=1$,  by definition and \Cref{q_ij=1 implies commuting relation}, we have 
$[l_i,l_j]=0$ and $[l_i, l_j^*]=0$.  It follows that $\A_i$ and $\A_j$ commute.

(2) Let $(i(1), \ldots, i(d)) \in \I_\varepsilon^{d}$ and $\left(a_1, \ldots, a_d\right) \in \A_{i(1)} \times \cdots \times \A_{i(d)}$, such that $\tau_\Hilb\left(a_k\right)=0$ for all $1 \leq k \leq d$. 
Applying the commutation relation  $ l_i^{*}l_i- q_{i,i} l_i l_i^* =\mathbf{1}$, each $a_k\in \A_{i(k)}$ can be written as 
$$a_k=\sum_{n, m \geq 0} \alpha_{n, m}^{(k)} l_{i(k)}^n l_{i(k)}^{* m} \in \mathcal{A}_{i(k)} \quad(k=1, \ldots, d),$$
with finitely many nonzero $\alpha^{(k)}_{n,m}$.\\
Since $\tau_\Hilb\left(a_k\right)=0$, we have $\alpha_{0, 0}^{(k)}=0$.
Let $v_k=a_k \Omega=\sum\limits_{n\geq 1} \alpha^{(k)}_{n,0} e_{i(k)}^{\otimes n}$. 
We now show that $a_1 \cdots a_d\Omega=v_1\otimes \cdots \otimes v_d$, by induction.
$d=1$ is obvious.  Assume that  the statement is true for $d-1$,  then  we have
$$a_2 \cdots a_d\Omega=v_2\otimes \cdots \otimes v_d.$$
Notice that 
$$v_2\otimes \cdots \otimes v_d=\sum_{n_2, \ldots, n_d>0} \alpha_{n_2,0}^{(2)} \ldots \alpha_{n_d, 0}^{(d)}
e^{\otimes n_2}_{i(2)}\otimes \cdots \otimes e^{\otimes n_d}_{i(d)}.$$
If $i(1)\not\in \{i(2),...,i(d)\}$, then $l^*_{i(1)} v_2\otimes \cdots \otimes v_d=0$.\\
If $i(1)\in \{i(2),...,i(d)\}$, say $i(1)=i(j_1)$, according to the definition of $\I_\varepsilon^d$, there exists   $1<j_2<j_1$ such that $q_{i(1),i(j_2)}=\varepsilon_{i(1),i(j_2)}=0$. Apply the annihilation relation, we also have $l^*_{i(1)} v_2\otimes \cdots \otimes v_d=0$.
Therefore, 
$$a_1v_2\otimes \cdots \otimes v_d=\sum_{n\geq 1} \alpha_{n, 0}^{(1)} l_{i(1)}^n v_2\otimes \cdots \otimes v_d=v_1\otimes \cdots \otimes v_d.$$
It follows that $\tau_\Hilb\left(a_1 \cdots a_d\right)=0$.  The proof is complete.

\end{proof}

\begin{corollary}
Let $(s_1,...,s_n)$ be a mixed q-Gaussian system with  $q_{i,j}\in\{0,1\}$ for $i\neq j$, and let $\A$ be the unital algebra generated by $s_1,...,s_n$.  Then, $s_i$'s are $\varepsilon$-free in $(\A,\tau_\Hilb)$.
\end{corollary}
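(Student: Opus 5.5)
The plan is to reduce the corollary to the preceding proposition. Each $s_i=l_i+l_i^*$ lies in $\A_i$, the unital $*$-algebra generated by $l_i$. By the proposition, the family $\{\A_i\}$ is $\varepsilon$-free in the $*$-algebra generated by all the $l_i$, with respect to $\tau_\Hilb$ and with $\varepsilon_{i,j}=q_{i,j}$. For each $i$, let $\mathcal{B}_i\subset\A_i$ be the unital algebra generated by $s_i$. The algebra $\A$ in the corollary is generated by these $\mathcal{B}_i$, and it is a unital subalgebra of the $*$-algebra generated by the $l_i$.

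The key observation is that $\varepsilon$-freeness passes to subalgebras, in the sense that the state restricts. I would verify the two clauses of \Cref{epsilon independence} for the family $\{\mathcal{B}_i\}$ in $(\A,\tau_\Hilb|_{\A})$.

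\emph{Clause (i).} Suppose $i\ne j$ and $\varepsilon_{i,j}=1$. Then $\A_i$ and $\A_j$ commute, because part (1) of the proof of the proposition shows $[l_i,l_j]=0$ and $[l_i,l_j^*]=0$, and taking adjoints gives the remaining relations. Hence $\mathcal{B}_i\subset\A_i$ and $\mathcal{B}_j\subset\A_j$ also commute.

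\emph{Clause (ii).} Take $(i(1),\dots,i(n))\in\I_\varepsilon^n$ and elements $a_k\in\mathcal{B}_{i(k)}\subset\A_{i(k)}$ with $\tau_\Hilb(a_k)=0$. These $a_k$ satisfy the hypotheses of clause (ii) for the $\A_i$, so $\tau_\Hilb(a_1\cdots a_n)=0$ follows directly from the proposition.

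The only point needing care is the definition of $\I^n_\varepsilon$. It uses $\varepsilon_{i(p),i(k)}$ only for $i(p)\neq i(k)$, so the diagonal values $q_{i,i}\in[-1,1]$, which are not $0$ or $1$, cause no trouble. The main step is simply checking that the commutation statement of \Cref{q_ij=1 implies commuting relation} covers the adjoints as well, and that is immediate. With both clauses verified, the $s_i$ are $\varepsilon$-free in $(\A,\tau_\Hilb)$.
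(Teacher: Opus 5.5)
Your proposal is correct and matches the paper, which states this corollary without separate proof as an immediate consequence of the preceding proposition. The reason is exactly yours: $s_i=l_i+l_i^*\in\A_i$, and both clauses of $\varepsilon$-freeness pass to the unital subalgebras generated by the $s_i$ with $\tau_\Hilb$ restricted to $\A$.
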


Given a family of  groups $\{G_i\mid i \in \I\}$  and $\varepsilon=(\varepsilon_{i,j})$, recall that the $\varepsilon$-product (or graph product) of $\{G_i\mid i \in \I\}$ is the quotient of the free product group $\star_{i \in \I} G_i$ by the relations that $G_i$ and $G_j$ commute whenever $\varepsilon_{i,j}=1$. 
Given a group $G$, we denote by $L(G)$ the group von Neumann algebra of $G$. 
For $q \in (-1,1] $, the spectral distribution of the $q $-Gaussian operator with respect to $\tau_\Hilb$ is diffuse; 
One should be careful that when $q=1$, the $q $-Gaussian operator is unbounded and has Gaussian spectral distribution with respect to the vacuum state. Fortunately, these unbounded operators exhibit favorable properties, such as being self-adjoint and densely defined. To analyze the von Neumann algebra generated by elements involving unbounded operators with a faithful tracial state, one can apply the Cayley transform.
Consequently, the von Neumann algebra generated by a single $q$-Gaussian operator for $q \in (-1, 1] $ is isomorphic to $ L(\mathbb{Z}) $.
For $q = -1 $, the corresponding spectral distribution is Bernoulli; thus, the von Neumann algebra generated by a single $ q $-Gaussian operator is isomorphic to $L(\mathbb{Z}_2)$.
Given $\varepsilon$ and a family of ($W^*$-) probability spaces $(\A_i,\phi_i)$, as in free probability, we have a probability space $(\A,\phi)$ such that there exist embeddings $\iota:\A_i\rightarrow \A$ such that
\begin{enumerate}
\item For all $a\in \A_i$, we have $\phi_i(a)=\phi(\iota_i(a))$,
\item $\A_i$'s are $\varepsilon$-free in $(\A,\phi)$.
\end{enumerate}
The construction uses graph products of operator algebras by Caspers and Fima~\cite{CF17}. 
Readers are referred to~\cite{ CSHJEN242,CSHJEN241,CDD252,CDD251} for more details on the construction and the properties of the associated von Neumann algebras.
In particular, we have the following isomorphism result.
\begin{proposition}
   Let $\mathcal {G}$ be a finite simple graph with adjacency matrix $\varepsilon=(\varepsilon_{i,j})$. Then the von Neumann algebra  $\gp_{\varepsilon} (L^{\infty}[0,1],d\mu)$,  the group von Neumann algebra $L(*_\varepsilon \mathbb{Z})$, and the $Q=(q_{i,j})$-Gaussian von Neumann algebra are isomorphic where $q_{i,j}=\varepsilon_{i,j}$ for $i\neq j$ and $q_{i,i}\in (-1,1]$ for all $i$.
\end{proposition}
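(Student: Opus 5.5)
We need to show three von Neumann algebras are isomorphic: the graph product $\gp_\varepsilon(L^\infty[0,1],d\mu)$ ($\mu$ diffuse), $L(*_\varepsilon\mathbb{Z})$, and the $Q$-Gaussian algebra $\Gamma_Q$ with $q_{i,j}=\varepsilon_{i,j}$ off the diagonal and $q_{i,i}\in(-1,1]$. The strategy is to show that each of the three is generated by a family of subalgebras, one per vertex, each isomorphic to $L(\mathbb{Z})\cong L^\infty[0,1]$ with a faithful normal trace. We then show these families are $\varepsilon$-free, and invoke uniqueness of the $\varepsilon$-free (graph) product. That uniqueness is the standard fact that the joint $*$-distribution of $\varepsilon$-free subalgebras is determined by the individual distributions: condition (i) of \Cref{epsilon independence} together with the vanishing condition (ii) computes every mixed moment recursively, as in the free case. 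A trace-preserving $*$-isomorphism between the generated $*$-algebras then extends to the von Neumann algebras via GNS, provided the traces are faithful and normal.

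\textbf{Step 1 (graph product vs.\ group algebra).} By \cite{CF17}, the graph product of the group von Neumann algebras $L(G_i)$ is $L(*_\varepsilon G_i)$. With $G_i=\mathbb{Z}$, and using $L(\mathbb{Z})\cong L^\infty(\mathbb{T})\cong L^\infty[0,1]$ trace-preservingly (Haar measure against a diffuse $\mu$, by the measure isomorphism theorem), this gives $\gp_\varepsilon(L^\infty[0,1],d\mu)\cong L(*_\varepsilon\mathbb{Z})$.

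\textbf{Step 2 (Gaussian side).} Let $\A_i''$ be the von Neumann algebra generated by $s_i$; when $q_{i,i}=1$, use the bounded functions of $s_i$ obtained via the Cayley transform, as remarked above. For $q_{i,i}\in(-1,1]$ the law of $s_i$ is diffuse, so $\A_i''\cong L^\infty[0,1]$ with the induced trace. The preceding Proposition (and its Corollary) shows that the unital $*$-algebras $\A_i$ generated by the $l_i$ are $\varepsilon$-free with respect to $\tau_\Hilb$; this passes to the subalgebras generated by the $s_i$. It then extends to the weak closures by normality of $\tau_\Hilb$ and Kaplansky density: approximate centered elements of $\A_i''$ by centered polynomials in $s_i$, and conclude by continuity of multiplication on bounded sets in the strong topology. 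Commutation for $\varepsilon_{i,j}=1$ also passes to the closures. In addition, $\tau_\Hilb$ must be a faithful normal trace on $\Gamma_Q$. Traciality follows from \Cref{mixed q-Gaussian moments}: the formula is invariant under cyclic rotation, since rotating a pair partition preserves crossings and the symmetric $q_{i,j}$ is insensitive to pair order. Faithfulness follows because $\Omega$ is cyclic and separating, the right-action Gaussians giving the commutant as in \cite{BS94}.

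\textbf{Step 3 (conclude).} With both families $\varepsilon$-free, each generator algebra isomorphic to $(L^\infty[0,1],\mu)$, and traces faithful, the uniqueness principle gives the isomorphism $\Gamma_Q\cong\gp_\varepsilon(L^\infty[0,1],d\mu)$. The main obstacle is Step 2's analytic issues: the unbounded case $q_{i,i}=1$, and rigorous verification of faithfulness and traciality on the weak closure. I would handle the unbounded generators by working throughout with the bounded Cayley transforms $u_i$, which generate the same abelian algebra. Their moments are limits of polynomial moments, which justifies the extension of $\varepsilon$-freeness.
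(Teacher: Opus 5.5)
Your proposal is correct and follows essentially the same route as the paper, which justifies this statement only through the surrounding remarks: each single-generator algebra is diffuse abelian for $q_{i,i}\in(-1,1]$ and hence isomorphic to $L(\mathbb{Z})\cong L^\infty[0,1]$, the preceding Proposition and Corollary give $\varepsilon$-freeness when $q_{i,j}\in\{0,1\}$, and the Caspers--Fima graph product (with a Cayley transform when $q_{i,i}=1$) identifies the algebras. Your explicit checks of traciality, faithfulness and the passage of $\varepsilon$-freeness to weak closures go beyond what the paper writes, but the unbounded case $q_{i,i}=1$ is still only sketched, in your proposal and in the paper alike: approximating bounded functions of $s_i$ by polynomials inside mixed moments needs a further argument, for instance determinacy of the relevant moment problems.
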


Finally, we apply \Cref{main theorem} to obtain, as a special case, an asymptotic $\varepsilon$-freeness result for mixed $q$-Gaussian variables.

Let $Q=(q_{i,j})_{i,j=1,...,d}$ such that $q_{i,j}=q_{j,i}$, $q_{i,j}\in\{0,1\}$ for $i\neq j$, and $q_{i,i}=0$ for all $i$.
Let $n=d^2m$, and let  $B_{i,j,n}=\{ (i-1)dm+(j-1)m+k\mid k=1,...,m \}$ for $i,j=1,...,d$. Then, the sets $B_{i,j,n}$'s are pairwise disjoint and $|B_{i,j,n}|=m=\frac{n}{d^2}$.
Let $A'_{i,n}=\bigcup\limits_{j:\, q_{i,j}=0} \bigl(B_{i,j,n}\cup B_{j,i,n}\bigr)$. 
Then $|A'_{i,n}|\leq (2d-1)m\leq d^2m=n$, and 
$$A'_{i,n}\cap A'_{j,n}=\begin{cases}
\emptyset, & \text{if}\, q_{i,j}=1 \\[0.6em]
B_{i,j,n}\cup B_{j,i,n}, & \text{if}\, q_{i,j}=0.
\end{cases}$$

For each $k=1,\dots,d$, take $A''_{k,n}\subset \{ kn+1,kn+2,...,(k+1)n\}$ such that $|A''_{k,n}|=n-(2d-1)m$, and let $A_{k,n}=A'_{k,n}\cup A''_{k,n}$. Then, for $i\neq j$, 
$$\lvert A_{i,n}\cap A_{j,n}\rvert =\begin{cases}
0, & \text{if}\, q_{i,j}=1, \\[0.6em]
2m, & \text{if}\, q_{i,j}=0.
\end{cases}$$

For each $k=1,\dots,d$ and $n\in\N$, we thus obtain a set $A_{k,n}\subset \N$ such that $|A_{k,n}|=n$.
Then
$$|A_{i,n}\cap A_{j,n}|=2(1-q_{i,j})m=\frac{2(1-q_{i,j})n}{d^2}.$$

Let $r_{k,n}=2[\frac{n^{\frac{2}{3}}}{4}]$. Then $r_{k,n}$'s are even, and $\lim\limits_{n\rightarrow \infty}\frac{r_{k,n}}{n}=0.$
Moreover,
$$
\lim\limits_{n\rightarrow \infty}\frac{r_{i,n}r_{j,n}}{n} \cdot \frac{\lvert A_{i,n}\cap A_{j,n} \rvert}{n}=
\begin{cases}
0, & \text{if}\, q_{i,j}=1 \\[0.6em]
\infty, & \text{if}\, q_{i,j}=0.
\end{cases}
$$

Now, take 
$$ 
  H_{k,n}=\frac{(\sqrt{-1})^{\left\lfloor r_{k,n} / 2\right\rfloor}}{\binom{n}{r_{k,n}}^{1 / 2}} \sum_{ i_1<\cdots<i_{r_{k,n}} \in A_{k,n}} J_{k;i_1, \ldots, i_{r_{k,n}}} \psi_{i_1} \cdots \psi_{i_{r_{k,n}}},
$$
where the random variables $J_{k;i_1,\dots,i_{r_{k,n}}}$ are independent standard Gaussian random variables and $\psi_{i}$ are Majorana fermions satisfying the canonical anticommutation relations and can be chosen from an infinite-dimensional CAR algebra.  

Then, by \Cref{main theorem},
$\{H_{k,n}\mid k=1,\dots,d \}$ converges in distribution to the mixed $q$-Gaussian system $\{s_k\mid k=1,\dots,d \}$ with $q$-relation $Q$.

\section{SYK Models}

Recall that in \Cref{main theorem}, we are considering the following models 
$$
	H_{k,n}=\frac{(\sqrt{-1})^{\lfloor r_{k,n}/2\rfloor}}{\binom{n}{r_{k,n}}^{1/2}}\sum_{{i_1<\dots<i_{r_{k,n}}}\in A_{k,n}} J_{k;i_1,\dots,i_{r_{k,n}}}\,\psi_{i_1}\dots\psi_{i_{r_{k,n}}},
$$
where $\mathcal{I}$ is an index set, and for each $k\in\mathcal{I}$ , $A_{k,n} \subset \mathbb{N}$ with $|A_{k,n}| = n$;
the $J_{k;i_1,\dots,i_{r_{k,n}}}$'s are independent random variables 
and  the $\psi_{i}$'s are Majorana fermions satisfying the canonical anticommutation relations.  For simplicity, given an increasing sequence $R=(i_1,\dots,i_r)\subset \N$ with $ i_1<\cdots<i_r$,  we write
$$
J_{k,R}:=J_{k;i_1,\dots,i_r},
\quad
\Psi_R:=\psi_{i_1}\cdots\psi_{i_r}.
$$
Let  $
I_{k,n}=\bigl\{(i_1,\dots,i_{r_{k,n}})\in A_{k,n}^{\,r_{k,n}}|\ i_1<\cdots<i_{r_{k,n}}\bigr\}
$
and  $C_{k,n}:=\frac{(\sqrt{-1})^{\lfloor r_{k,n}/2\rfloor}}{\binom{n}{r_{k,n}}^{1/2}}$. The SYK Hamiltonians can be simply written as 
$$
H_{k,n}=C_{k,n}\sum_{R\in I_{k,n}} J_{k,R}\,\Psi_{R}.
$$

The following identities are standard (see~\cite{FTW19}) and will be used repeatedly:
\begin{itemize}
  \item[a.] For any increasing tuple $R$,
  \begin{equation}\label{eq:psi_selfadjoint}
  	 \tr\!\left((\sqrt{-1})^{\,2\lfloor |R|/2\rfloor}\,\Psi_R^{\,2}\right)=1.
  \end{equation}
  
  \item[b.] If $R \neq \emptyset$, then $\tr(\Psi_R) = 0$. Moreover, for any $R_1, \dots, R_d$, we have

  $$\bigl|\tr(\Psi_{R_1}\cdots\Psi_{R_d})\bigr|\in \{0,1\}.$$

  \item[c.] For any increasing tuples $A,B$,
  \begin{equation}\label{eq:psi_commute}
  	\Psi_A\Psi_B
  =
  (-1)^{|A||B|}(-1)^{|A\cap B|}\,\Psi_B\Psi_A.
  \end{equation}
\end{itemize}

In \Cref{main theorem}, the conditions on parameters are

\begin{itemize}
\item For any $i \in \mathcal{I}$, $\lim\limits_{n\to\infty} \frac{r_{i,n}}{n}=0$.
  \item For every $k\in\mathcal{I}$, $(r_{k,n})_{n\in \N}$ has the same parity, i.e., there exists $r_k\in\{0,1\}$ such that $$r_{k,n}\equiv r_k \pmod 2$$ for all $n$.

  \item For any $i,j\in\mathcal{I}$, the limit below exists:
$$
  \lim_{n\to\infty}\frac{r_{i,n}r_{j,n}}{n}\cdot\frac{|A_{i,n}\cap A_{j,n}|}{n}=\lambda_{i,j}\in[0,\infty].
 $$
 \item For any $i,j \in \mathcal{I}$, $q_{i,j}=(-1)^{r_i r_j}e^{-2\lambda_{i,j}}$.
\end{itemize}

To prove the theorem, we actually need to check the following limit
\begin{equation}\label{eq:EtrH}
\lim_{n\rightarrow \infty} 
\mathbb{E}\Bigl[\tr\bigl(H_{\epsilon(1),n} \cdots H_{\epsilon(d),n}\bigr)\Bigr]
=
\sum_{\substack{\pi \in \pp_2(d)\\ \pi \leq \ker \epsilon}} 
\prod_{i,j\in\mathcal I} q_{i,j}^{\,cr(\pi,\epsilon; i,j)}=\tau_\Hilb(s_{\epsilon(1)}\cdots s_{\epsilon(d)}).
\end{equation}
where $\epsilon:[d]\rightarrow \mathcal{I}$ is any index map.

Notice that 
$$\mathbb{E}\Bigl[\tr\bigl(H_{\epsilon(1),n}\cdots H_{\epsilon(d),n}\bigr)\Bigr]
=
\frac{(\sqrt{-1})^{\sum_{k=1}^{d}\lfloor r_{\epsilon(k),n}/2 \rfloor}}
{\prod_{k=1}^d\binom{n}{r_{\epsilon(k),n}}^{1/2}}
\sum_{R_{1}\in I_{\epsilon(1),n}}\cdots\sum_{R_{d}\in I_{\epsilon(d),n}}
\mathbb{E}\!\left[\prod_{k=1}^d J_{\epsilon(k),R_{k}}\right]\,
\tr\!\bigl(\Psi_{R_{1}}\cdots \Psi_{R_{d}}\bigr)  .$$

 Recall that $(\epsilon(1),R_{1}),...,(\epsilon(d),R_{d})$ are indices, we may consider the partition $$\ker[ (\epsilon(1),R_{1}),...,(\epsilon(d),R_{d})]\in \pp(d)$$ the family of partitions of $d$ elements.  
As in the case of the limiting distribution of eigenvalues for Wigner matrices, we will first show that
only the indices  $(\epsilon(1),R_{1}),...,(\epsilon(d),R_{d})$ such that  $\ker [(\epsilon(1),R_{1}),...,(\epsilon(d),R_{d})]\in \pp_2(d)$ contribute. 
\begin{lemma}
\label{lem:appear_twice}
 $$
\lim\limits_{n\rightarrow \infty}\frac{(\sqrt{-1})^{\sum_{k=1}^{d}\lfloor r_{\epsilon(k),n}/2 \rfloor}}
{\prod_{k=1}^d\binom{n}{r_{\epsilon(k),n}}^{1/2}}
\sum_{\substack{R_k \in I_{\epsilon(k),n} ,\forall k=1,...,d\\
\ker [(\epsilon(1),R_{1}),...,(\epsilon(d),R_{d})] \not\in \pp_2(d)}}
\mathbb{E}\!\left[\prod_{k=1}^d J_{\epsilon(k),R_{k}}\right]\,
\tr\!\bigl(\Psi_{R_{1}}\cdots \Psi_{R_{d}}\bigr)=0.
$$
\end{lemma}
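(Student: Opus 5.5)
We bound the sum term by term, using $|\tr(\Psi_{R_1}\cdots\Psi_{R_d})|\le 1$ from item b. The partition $\pi=\ker[(\epsilon(1),R_1),\dots,(\epsilon(d),R_d)]$ controls the expectation. If $\pi$ has a singleton block, independence and $\E[J]=0$ make $\E[\prod_k J_{\epsilon(k),R_k}]$ vanish. Hence only partitions in $\pp_{2,3}(d)$ survive, i.e.\ all blocks have size $\ge 2$ and at least one has size $\ge 3$. For such $\pi$ the moment bound gives $|\E[\prod_k J]|\le a_d$, where $a_d$ is a product of the $C_p$'s (and $1$'s), uniformly in $n$. Since $\pp_{2,3}(d)$ is finite, it suffices to show, for each fixed $\pi\in\pp_{2,3}(d)$ with $\pi\le\ker\epsilon$,
$$\frac{\#\{(R_1,\dots,R_d):\ \ker[(\epsilon(k),R_k)]_k=\pi\}}{\prod_{k=1}^d\binom{n}{r_{\epsilon(k),n}}^{1/2}}\longrightarrow 0 .$$
If $\pi\not\le\ker\epsilon$ there are no such tuples.

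Next we count. Within a block $B$ of $\pi$ all $\epsilon(k)$ agree, say equal to $i_B$, and all $R_k$ coincide. So the number of tuples is at most $\prod_{B\in\pi}|I_{i_B,n}|=\prod_{B\in\pi}\binom{n}{r_{i_B,n}}$, using $|A_{i_B,n}|=n$. The denominator equals $\prod_{B\in\pi}\binom{n}{r_{i_B,n}}^{|B|/2}$. The ratio is therefore at most
$$\prod_{B\in\pi}\binom{n}{r_{i_B,n}}^{1-|B|/2}.$$
Each factor is $\le 1$ because $|B|\ge 2$. For a block with $|B|\ge 3$ the factor is at most $\binom{n}{r_{i_B,n}}^{-1/2}$.

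The main obstacle is showing that $\binom{n}{r_{i,n}}\to\infty$. This needs $1\le r_{i,n}\le n-1$ eventually. The upper bound holds because $r_{i,n}/n\to 0$, and then $\binom{n}{r_{i,n}}\ge n\to\infty$ as long as $r_{i,n}\ge 1$. The degenerate case $r_{i,n}=0$ must be handled separately: there $H_{i,n}=J_{i,\emptyset}$ is a scalar, and the argument breaks down. We will assume $r_{k,n}\ge 1$, which is implicit in the model. If $r_{i,n}=0$ only along a subsequence, we can treat that subsequence separately, or note that parity then forces $r_i=0$. Under this assumption every $\pi\in\pp_{2,3}(d)$ contributes $O(n^{-1/2})$. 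Summing over the finitely many $\pi$ and multiplying by $a_d$ gives the limit $0$. The unimodular prefactor $(\sqrt{-1})^{\sum_k\lfloor r_{\epsilon(k),n}/2\rfloor}$ does not affect absolute values.
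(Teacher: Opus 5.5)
Your proof is correct and follows the paper's argument essentially verbatim: singleton blocks vanish by independence and mean zero, the number of tuples with kernel $\pi$ is at most $\prod_{B\in\pi}\binom{n}{r_{i_B,n}}$, this is compared with the denominator $\prod_{B\in\pi}\binom{n}{r_{i_B,n}}^{|B|/2}$, and a block of size at least $3$ forces decay because $\binom{n}{r_{i,n}}\to\infty$. You are more careful than the paper in justifying $\binom{n}{r_{i,n}}\ge n$ from $1\le r_{i,n}\le n-1$, but the hypothesis $r_{i,n}\ge 1$ (eventually) is genuinely necessary and cannot be ``treated separately,'' since if $r_{i,n}=0$ infinitely often the statement fails along that subsequence (e.g.\ $d=4$, $\epsilon\equiv i$ leaves the single term $\E[J_{i,\emptyset}^4]\ge 1$).
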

\begin{proof}

Since $\E[ J_{\epsilon(k),R_{k}}]=0$, if $\ker [(\epsilon(1),R_{1}),...,(\epsilon(d),R_{d})]$ contains a block with one element, then 
$$\mathbb{E}\!\left[\prod_{k=1}^d J_{\epsilon(k),R_{k}}\right]=0.$$
Now, we assume that the sizes of blocks of  $\ker [(\epsilon(1),R_{1}),...,(\epsilon(d),R_{d})]$ are greater 1.
Let $\ker [(\epsilon(1),R_{1}),...,(\epsilon(d),R_{d})]=\pi=\{V_1,...,V_t\}.$
Then $|V_i|\geq 2$. For each $i=1,\dots,t$, choose a representative $v_i\in V_i$ and let $\beta_i= \epsilon(v_i)$.

We have
$$\#\left\{(R_1,\dots,R_d)\,\middle|\, R_k\in I_{\epsilon(k),n}\ \forall k=1,\dots,d,\ 
\ker[(\epsilon(1),R_1),\dots,(\epsilon(d),R_d)]=\pi \right\}\leq \prod\limits_{i=1}^t \binom{n}{r_{\beta_i,n}},$$
where
$$\prod_{k=1}^d\binom{n}{r_{\epsilon(k),n}}^{1/2}= \prod\limits_{i=1}^t \binom{n}{r_{\beta_i,n}}^{\frac{|V_i|}{2}}.
$$
Since $\mathbb{E}\!\left[\prod_{k=1}^d J_{\epsilon(k),R_{k}}\right]$ is uniformly bounded by a number $M$, and since$\lim\limits_{n\rightarrow \infty} \binom{n}{r_{\epsilon(k),n}}=\infty$, we have
$$
\left| \frac{(\sqrt{-1})^{\sum_{k=1}^{d}\lfloor r_{\epsilon(k),n}/2 \rfloor}}
{\prod_{k=1}^d\binom{n}{r_{\epsilon(k),n}}^{1/2}}
\sum_{\substack{R_k \in I_{\epsilon(k),n} ,\forall k=1,...,d\\
\ker [(\epsilon(1),R_{1}),...,(\epsilon(d),R_{d})]=\pi}}
\mathbb{E}\!\left[\prod_{k=1}^d J_{\epsilon(k),R_{k}}\right]\,
\tr\!\bigl(\Psi_{R_{1}}\cdots \Psi_{R_{d}}\bigr)\right|\leq \prod\limits_{i=1}^t \binom{n}{r_{\beta_i,n}}^{1-\frac{|V_i|}{2}}M.
$$
If $\pi$ contains a block of size greater than $2$, then the limit is $0$, and since the number of partitions of $d$ elements is finite, the statement follows.
\end{proof}

By \Cref{lem:appear_twice}, in the limit \eqref{eq:EtrH}, it suffices to consider the case where the number of factors is even. Assume that $\epsilon:[2d]\rightarrow \mathcal{I}$. Then we have 
$$\begin{aligned}
&\lim_{n\rightarrow \infty} 
\mathbb{E}\Bigl[\tr\bigl(H_{\epsilon(1),n} \cdots H_{\epsilon(2d),n}\bigr)\Bigr]\\
=&\lim\limits_{n\rightarrow \infty}\frac{(\sqrt{-1})^{\sum_{k=1}^{2d}\lfloor r_{\epsilon(k),n}/2 \rfloor}}
{\prod_{k=1}^{2d}\binom{n}{r_{\epsilon(k),n}}^{1/2}}
\sum_{\substack{R_k \in I_{\epsilon(k),n} ,\forall k=1,...,2d\\
 \ker [(\epsilon(1),R_{1}),...,(\epsilon(2d),R_{2d})] \in \pp_2(2d)}}
\mathbb{E}\!\left[\prod_{k=1}^{2d} J_{\epsilon(k),R_{k}}\right]\,
\tr\!\bigl(\Psi_{R_{1}}\cdots \Psi_{R_{2d}}\bigr)
\end{aligned}.$$

Similarly to the proof of \Cref{lem:appear_twice}, the condition
$$\ker [(\epsilon(1),R_{1}),...,(\epsilon(2d),R_{2d})] \in \pp_2(2d)$$
can be replaced by
$$\ker [R_{1},...,R_{2d}] \in \pp_2(2d) \quad\text{and}\quad \ker [R_{1},...,R_{2d}] \leq \ker\epsilon.$$

On the other hand, we have $\mathbb{E}\!\left[\prod_{k=1}^{2d} J_{\epsilon(k),R_{k}}\right]=1$ whenever $\ker [(\epsilon(1),R_{1}),...,(\epsilon(2d),R_{2d})] \in \pp_2(2d)$.
Therefore, we have 

$$\lim_{n\rightarrow \infty} 
\mathbb{E}\Bigl[\tr\bigl(H_{\epsilon(1),n} \cdots H_{\epsilon(2d),n}\bigr)\Bigr]
=\lim\limits_{n\rightarrow \infty}\frac{(\sqrt{-1})^{\sum_{k=1}^{2d}\lfloor r_{\epsilon(k),n}/2 \rfloor}}
{\prod_{k=1}^{2d}\binom{n}{r_{\epsilon(k),n}}^{1/2}}
\sum\limits_{\substack{\pi\in \pp_2(2d)\\ \pi\leq \ker\epsilon}}
\sum_{\substack{R_k \in I_{\epsilon(k),n} ,\forall k=1,...,2d\\
\ker [R_{1},...,R_{2d}]  =\pi}}
\tr\!\bigl(\Psi_{R_{1}}\cdots \Psi_{R_{2d}}\bigr).$$

Thus, it suffices to show that for all $\pi\in \pp_2(2d)$ such that $\pi\leq \ker\epsilon$, we have
\begin{equation}\label{eq:EtrH1}
\lim\limits_{n\rightarrow \infty}\frac{(\sqrt{-1})^{\sum_{k=1}^{2d}\lfloor r_{\epsilon(k),n}/2 \rfloor}}
{\prod_{k=1}^{2d}\binom{n}{r_{\epsilon(k),n}}^{1/2}}
\sum_{\substack{R_k \in I_{\epsilon(k),n} ,\forall k=1,...,2d\\
\ker [R_{1},...,R_{2d}]  =\pi}}
\tr\!\bigl(\Psi_{R_{1}}\cdots \Psi_{R_{2d}}\bigr)
= 
\prod_{i,j\in\mathcal I} q_{i,j}^{\,cr(\pi,\epsilon; i,j)}.
\end{equation}

By applying relations \eqref{eq:psi_selfadjoint} and \eqref{eq:psi_commute}, and by adjusting crossing pairs to noncrossing pairs, we obtain the following lemma.

\begin{lemma}\label{lem: probabilistic}
Let $\pi=\{(e_i,z_i)_{i=1}^{d}|1=e_1<e_2\dots<e_{d}<2d, e_i<z_i\} \in \pp_2(2d)$
and let $$E=\{(i,j)\mid (e_i,z_i) \text{ and } (e_j,z_j)\, \text{are crossing}\}.$$
If $\ker(R_1,...,R_{2d})=\pi$, then we have 
$$
(\sqrt{-1})^{\sum\limits_{k=1}^{2d}\lfloor r_{\epsilon(k),n}/2 \rfloor}\tr\!\bigl(\Psi_{R_{1}}\cdots \Psi_{R_{2d}}\bigr)=(-1)^{\sum\limits_{(i,j)\in E}\left|R_{\epsilon(e_i)}\cap R_{\epsilon(e_j)}\right|+ |R_{\epsilon(e_i)}|\cdot|R_{\epsilon(e_j)}|},
$$
where $|R_{e_i} \cap R_{e_j}|$ is the cardinality of the set $R_{e_i} \cap R_{e_j}$.
\end{lemma}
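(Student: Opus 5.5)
Since $\ker(R_1,\dots,R_{2d})=\pi$, the product $\Psi_{R_1}\cdots\Psi_{R_{2d}}$ contains each $\Psi_{R}$ exactly twice, once at $e_i$ and once at $z_i$. Write $S_i:=R_{e_i}=R_{z_i}$ for the common tuple of the $i$-th block. The plan is to reduce the word to $\prod_i \Psi_{S_i}^2$ by commuting factors past each other with \eqref{eq:psi_commute}, tracking signs, and then to evaluate the resulting scalar with \eqref{eq:psi_selfadjoint}.

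The reduction is an induction on $d$, using traciality of $\tr$ and the normalization $e_1=1$. Take the block $(e_1,z_1)=(1,z_1)$. Move $\Psi_{S_1}$ from position $z_1$ leftward until it sits next to position $1$. It passes every factor strictly between $1$ and $z_1$. Each block $(e_j,z_j)$ nested inside $(1,z_1)$ contributes two such factors, both equal to $\Psi_{S_j}$, so the two signs $(-1)^{|S_1||S_j|+|S_1\cap S_j|}$ cancel. A block crossing $(1,z_1)$ has exactly one endpoint inside $(1,z_1)$ and contributes exactly one factor, hence exactly one sign $(-1)^{|S_1||S_j|+|S_1\cap S_j|}$. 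Since $e_1=1$ is minimal, no block contains $(1,z_1)$.

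After this step we have $\Psi_{S_1}^2$ followed by a word corresponding to the pair partition $\pi\setminus\{(1,z_1)\}$ on the remaining $2d-2$ positions. Relabeling preserves crossings among the remaining blocks. The scalar $\Psi_{S_1}^2=(-1)^{\lfloor |S_1|/2\rfloor}\cdot 1$ by the CAR: indeed $\Psi_S^2=(-1)^{|S|(|S|-1)/2}$ and $|S|(|S|-1)/2\equiv\lfloor |S|/2\rfloor \pmod 2$. So $\Psi_{S_1}^2$ is a central scalar and factors out of the trace.

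Iterating gives
\[
\tr(\Psi_{R_1}\cdots\Psi_{R_{2d}})=\prod_{i=1}^d(-1)^{\lfloor |S_i|/2\rfloor}\prod_{(i,j)\in E}(-1)^{|S_i||S_j|+|S_i\cap S_j|},
\]
with each crossing pair counted exactly once. It is counted when the earlier-opened block is eliminated, and after that the block is gone.

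For the prefactor, note $\sum_{k=1}^{2d}\lfloor r_{\epsilon(k),n}/2\rfloor=2\sum_i\lfloor |S_i|/2\rfloor$, because $\pi\le\ker\epsilon$ and $|R_k|=r_{\epsilon(k),n}$. Hence $(\sqrt{-1})^{\sum_k\lfloor r_{\epsilon(k),n}/2\rfloor}=\prod_i(-1)^{\lfloor |S_i|/2\rfloor}$, which cancels the diagonal signs exactly, in the same way as \eqref{eq:psi_selfadjoint}. This leaves the claimed formula, whose notation $R_{\epsilon(e_i)}$ is read as $S_i=R_{e_i}$.

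The main obstacle is the bookkeeping in the commutation step. One must justify that nested blocks cancel pairwise even though the factors between them are themselves in arbitrary order. This holds because each commutation sign depends only on the pair of tuples involved, not on their positions. One must also make sure that, at every stage of the induction, the block being eliminated is the first-opening block in the current cyclic word.
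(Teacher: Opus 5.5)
Your argument is correct and follows essentially the paper's approach. The paper only says in one line that one applies \eqref{eq:psi_commute} to undo crossings and \eqref{eq:psi_selfadjoint} to evaluate the resulting squares. Your induction is a complete implementation of that sketch: you move the closer of the first-opening block next to its opener, nested blocks contribute cancelling pairs of signs, each crossing block contributes exactly one sign, and you then use $\Psi_S^2=(-1)^{\lfloor |S|/2\rfloor}$. One minor remark: traciality is not needed, since after a block is removed the leftmost remaining factor is automatically the opener of the next block, and the prefactor identity only uses $|R_{e_i}|=|R_{z_i}|$ rather than $\pi\le\ker\epsilon$.
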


Notice that for $\pi\in \pp_2(2d)$ with $\pi\leq \ker\epsilon$, we have $\epsilon(e_i)=\epsilon(z_i)$.

Let $R_{\epsilon(e_i)}$ be a uniformly distributed random set of size $r_{\epsilon(e_i),n}$ drawn from $A_{\epsilon(e_i),n}$. By the proof of \Cref{lem:appear_twice}, the probability that $R_{\epsilon(e_i)}=R_{\epsilon(e_j)}$ tends to $0$.
Together with \Cref{lem: probabilistic}, \eqref{eq:EtrH1} is equivalent to

\begin{equation}\label{eq: EtrH2}
\lim_{n \to \infty} \E\left[ (-1)^{\sum_{(i,j)\in E} |R_{\epsilon(e_i)} \cap R_{\epsilon(e_j)}|+ |R_{\epsilon(e_i)}|\cdot|R_{\epsilon(e_j)}|} \right]=\prod_{i,j \in \mathcal{I}} q_{i,j}^{cr(\pi,\epsilon; i,j)}.
\end{equation}

Recall that $$cr(\pi,\epsilon; i,j)= \#\left\{ (v_1, v_2) \in \pi \times \pi \mid v_1 \subset \epsilon^{-1}(i), v_2\subset \epsilon^{-1}(j), v_1<v_2 \text{ and } v_1, v_2 \text{ are crossing} \right\},$$
only  $q_{i,j}$ with $i,j\in \epsilon([2d])$ contribute.
$$
\begin{aligned}
&cr(\pi,\epsilon; \epsilon(e_i),\epsilon(e_j))\\ =& \#\left\{ (v_1, v_2) \in \pi \times \pi \mid v_1 \subset \varepsilon^{-1}(\epsilon(e_i)), v_2\subset \varepsilon^{-1}(\epsilon(e_j)), v_1<v_2 \text{ and } v_1, v_2 \text{ are crossing} \right\}\\
=& \#\left\{ (i',j') \in E\mid \epsilon(e_{{i'}})=\epsilon(e_i), \epsilon(e_{j'})=\epsilon(e_j)     \right\}
\end{aligned}
$$

It follows that
$$ q_{\epsilon(e_i),\epsilon(e_j)}^{cr(\pi,\epsilon; \epsilon(e_i),\epsilon(e_j))}=\prod\limits_{\substack{(i',j')\in E\\
\epsilon(e_{i'})=\epsilon(e_i), \epsilon(e_{j'})=\epsilon(e_j)}}q_{\epsilon(e_{i'}),\epsilon(e_{j'})}.$$

Therefore, we have 
$$\prod_{ i,j\in \mathcal{I}} q_{i,j}^{cr(\pi,\epsilon; i,j)}=\prod\limits_{(i,j)\in E
}q_{\epsilon(e_{i}),\epsilon(e_{j})}.$$

Notice that $(-1)^{|R_{\epsilon(e_i)}|\cdot|R_{\epsilon(e_j)}|}=(-1)^{r_{\epsilon(e_i)} r_{\epsilon(e_j)}}$ and 
$q_{i,j}=(-1)^{r_i r_j}e^{-2\lambda_{i,j}}$.
Thus, \eqref{eq: EtrH2} is equivalent to 

\begin{equation}\label{eq: EtrH3}
\lim_{n \to \infty} \mathbb{E}\left[ (-1)^{\sum_{(i,j)\in E} |R_{\epsilon(e_i)} \cap R_{\epsilon(e_i)}|+ r_{\epsilon(e_i)} r_{\epsilon(e_j)}} \right]=\prod_{(i,j) \in E} (-1)^{r_{\epsilon(e_i)} r_{\epsilon(e_j)}} e^{-2\lambda_{\epsilon(e_{i}),\epsilon(e_{j})}}.
\end{equation}

Therefore, it suffices to show that 

\begin{equation}\label{eq: EtrH4}
\lim_{n \to \infty} \mathbb{E}\left[ (-1)^{\sum_{(i,j)\in E} |R_{\epsilon(e_i)} \cap R_{\epsilon(e_j)}|} \right]=\prod_{(i,j) \in E} e^{-2\lambda_{\epsilon(e_{i}),\epsilon(e_{j})}}.
\end{equation}

Recall the assumption that $\{R_{\epsilon(e_i)}\mid i=1,...,d\}$ are independent. In summary, to prove \Cref{main theorem}, it suffices to prove the following proposition.

\begin{proposition}\label{Main Proposition}

Let $d\in \N$ and  $[d]=\{1,...,d\}$.  For each $k\in [d]$ and $n\in \N$, let $A_{k,n}\subset \N$ such that $|A_{k,n}|=n$ and let $r_{k,n}\in \N$ such that $\lim\limits_{n\rightarrow\infty} \frac{r_{k,n}}{n}=0.$  For all $1\leq i<j\leq d$ 
$$
\lim\limits_{n\rightarrow \infty}\frac{r_{i,n}r_{j,n}}{n} \cdot \frac{\lvert A_{i,n}\cap A_{j,n} \rvert}{n}=\lambda_{i,j} \in[0, \infty].
$$
Let $E\subset\{(i,j)\mid 1\leq i<j\leq d\}$, and for each $i\in [d]$, let $R_{i}$ be a random ordered sequence of length $r_{i,n}$ uniformly chosen from $A_{i,n}.$
Then 
$$\lim_{n \to \infty} \mathbb{E}\left[ (-1)^{\sum_{(i,j)\in E} |R_{i} \cap R_{j}|} \right]=\prod_{(i,j) \in E} e^{-2\lambda_{i,j}},$$
with  the convention $e^{-\infty}=0.$
\end{proposition}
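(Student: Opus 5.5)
We prove the statement by exposing the randomness element by element: decompose each intersection size into indicator variables over the ground points, and use the independence of the $R_i$ to reduce everything to single-pair asymptotics plus decorrelation between pairs. For $x\in\N$ write $\xi_i(x)=\mathbf 1_{x\in R_i}$. Then
\[
\sum_{(i,j)\in E}|R_i\cap R_j|=\sum_{x}\sum_{(i,j)\in E}\xi_i(x)\xi_j(x),
\]
so the random sign is $\prod_x (-1)^{N_x}$ with $N_x=\sum_{(i,j)\in E}\xi_i(x)\xi_j(x)$.

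\textbf{Step 1 (single pair).} First treat $|E|=1$. Conditionally on $R_j$, the variable $|R_i\cap R_j|$ is hypergeometric: we draw $r_i$ elements from $A_i$, and $m=|R_j\cap A_i|$ of them are marked. Averaging, $m$ is itself hypergeometric with mean $r_j|A_i\cap A_j|/n$. For a hypergeometric $X$ with population $n$, $r$ draws and $m$ marked elements, we use the representation
\[
\E[(-1)^X]=\sum_k (-2)^k\binom{m}{k}\binom{r}{k}\Big/\binom{n}{k},
\]
which follows from $(-1)^X=(1-2)^X=\sum_k(-2)^k\binom Xk$ together with the factorial moments $\E\binom Xk=\binom mk\binom rk/\binom nk$. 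Since $r/n\to0$ and $m/n\to0$, each term is $\approx (-2mr/n)^k/k!$. When $mr/n$ stays bounded we get $e^{-2mr/n}$ by dominated convergence, with summable majorant $(2mr/n)^k/k!$. Averaging over $m$ and using the concentration of $m$ then gives $e^{-2\lambda_{i,j}}$. When $\lambda=\infty$ we need the expectation to tend to $0$, and the alternating series cannot be bounded termwise there. Instead, write $|R_i\cap R_j|$ as the number of successes in $r_i$ sequential draws. Each draw succeeds with conditional probability roughly $p=m/n$ (up to a $1+o(1)$ factor), and we show the parity equidistributes through the product $\prod(1-2p_t)$ of the conditional sign-biases. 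A cleaner route: condition on the first $r_i-s$ draws and note that the last $s$ draws, with $s\to\infty$ and $sp\to\infty$ slowly, already give bias $|1-2p|^{s}(1+o(1))\to0$.

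\textbf{Step 2 (several pairs, finite $\lambda$).} Here I would use the same factorial-moment expansion jointly. Write $(-1)^{\sum_{e\in E} X_e}=\prod_{e\in E}\sum_{k_e}(-2)^{k_e}\binom{X_e}{k_e}$. The joint factorial moment $\E\prod_e\binom{X_e}{k_e}$ counts choices of $k_e$ distinct points in each $R_i\cap R_j$. Split these choices according to whether the chosen point sets for different edges are disjoint. Disjoint configurations factorize asymptotically into $\prod_e (r_ir_j|A_i\cap A_j|/n^2)^{k_e}/k_e!$, because inclusion probabilities of $O(1)$ points in uniform $r$-subsets are $(r/n)^{\#}(1+o(1))$. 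A shared point, say $x\in R_i\cap R_j\cap R_l$, costs an extra factor of order $r_l/n\to0$ relative to the count it replaces, so it is negligible. Summing the series with a uniform exponential majorant yields $\prod_E e^{-2\lambda_{i,j}}$.

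\textbf{Step 3 (some $\lambda_{i,j}=\infty$).} This is the main obstacle. Pick $(i_0,j_0)\in E$ with $\lambda_{i_0,j_0}=\infty$ and condition on all $R_k$ with $k\neq i_0$. The total exponent becomes $|R_{i_0}\cap S|+|R_{i_0}\cap T|+\text{const}$, where $S=\bigcup$ is the mod-2 combination of the neighbours of $i_0$ in $E$: effectively $R_{i_0}$ meets the symmetric-difference set $D=\triangle_{j:(i_0,j)\in E} R_j$, and the sign is $(-1)^{|R_{i_0}\cap D|}$. By the single-pair mechanism in Step 1, $|\E[(-1)^{|R_{i_0}\cap D|}\mid D]|\to0$ as soon as $r_{i_0}|D\cap A_{i_0}|/n\to\infty$. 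It therefore suffices to show that, with probability tending to $1$, $|D\cap A_{i_0}|\ge \tfrac12 |R_{j_0}\cap A_{i_0}|$ and that $r_{i_0}|R_{j_0}\cap A_{i_0}|/n\to\infty$ in probability. The second claim is concentration of a hypergeometric variable with mean $\to\infty$ in the appropriate sense; the degenerate case where the mean stays small while $\lambda=\infty$ cannot occur, since $\lambda=\infty$ forces the mean to $\infty$. For the first claim, the other neighbours $R_j$ overlap $R_{j_0}$ in at most a $r_j/n\to0$ fraction of points on average, so $D$ retains most of $R_{j_0}\cap A_{i_0}$. The delicate point is making these in-probability statements uniform enough to pass the limit through the conditional expectation; bounded convergence, since the integrand is in $[-1,1]$, should handle it. That would complete the proof.
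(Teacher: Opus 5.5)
Your argument is correct. For finite $\lambda_{i,j}$ it follows the paper's route: expand $(-1)^X=\sum_k(-2)^k\binom{X}{k}$, compute factorial moments by counting point configurations, and pass to the limit by dominated convergence. Your explicit accounting of shared points, each costing an extra factor $r_{l,n}/n$, makes precise the paper's reduction to pairwise disjoint $X_{i,j}$. The same counting also gives the majorant. Each class of coinciding chosen points contributes at most $M=\sup_n\max_{(i,j)\in E}a_{ij,n}r_{i,n}r_{j,n}/n^2$, so $\mathbb{E}\binom{X}{k}\le\frac{|E|^k}{k!}\sum_{\pi\in\mathcal{P}(k)}M^{|\pi|}$, which is summable against $2^k$.

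The genuine difference is the case $\lambda_{i_0,j_0}=\infty$.

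The paper conditions on $R_k$ for $k\ge3$ and on the traces $R_1\cap S$, $R_2\cap S$, where $S=\bigcup_{k\ge3}R_k$. This leaves two conditionally independent free parts $V_1,V_2$. The paper must then control four quantities ($p$, $m-p$, $q$, $m-q$) by Chebyshev, together with the survival bounds $\Lambda_2\ge c_dn$ and $\Lambda_{12}\ge c_da_{12,n}$.

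You instead condition on every $R_k$ with $k\neq i_0$ and use $\sum_j|R_{i_0}\cap R_j|\equiv|R_{i_0}\cap D|\pmod 2$. Here $D$ is the symmetric difference of the sets of \emph{all} neighbours of $i_0$, including those $j<i_0$ with $(j,i_0)\in E$. Only one hypergeometric variable is left, and you only need $r_{i_0,n}|D\cap A_{i_0,n}|/n\to\infty$ in probability. This follows because $D\supseteq R_{j_0}\setminus\bigcup_{j\neq j_0}R_j$, combined with:
\begin{itemize}
\item Chebyshev for $|R_{j_0}\cap A_{i_0,n}|$, whose mean $\mu=r_{j_0,n}a_{i_0j_0,n}/n\to\infty$;
\item Markov for $\sum_{j\neq j_0}|R_{j_0}\cap A_{i_0,n}\cap R_j|$, whose mean is at most $\mu\sum_j r_{j,n}/n=o(\mu)$.
\end{itemize}
This is shorter and avoids the $\Lambda$ bookkeeping.

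What you still have to supply is an actual single-pair decay estimate. The product $\prod_t(1-2p_t)$ is not the parity bias, because $p_t$ depends on the earlier draws. The ``last $s$ draws'' variant needs a quantitative hypergeometric--binomial comparison; a birthday-type error $O(s^2/n)$ is too weak when $|D\cap A_{i_0,n}|\lesssim\sqrt n$, which is allowed in this regime. You can close this in either of two ways:
\begin{itemize}
\item Quote the Feng--Tian--Wei bound the paper uses, $|F(p,q,m)|\le e^{-a_pa_q/2m}$. Since $|D|\le\sum_j r_{j,n}=o(n)$, this gives $|\mathbb{E}[(-1)^{|R_{i_0}\cap D|}\mid D]|\le e^{-r_{i_0,n}|D\cap A_{i_0,n}|/(2n)}$.
\item Use the Diaconis--Freedman total variation bound $O(r_{i_0,n}/n)$, which gives $e^{-2r_{i_0,n}|D\cap A_{i_0,n}|/n}+O(r_{i_0,n}/n)$.
\end{itemize}
Both bounds are uniform in $D$, so bounded convergence finishes the argument.
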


In the above proposition, $R_{i}$ plays the role of $R_{\epsilon(e_i)}$ in \eqref{eq: EtrH4}. 
The proof of \Cref{Main Proposition} splits into the following two cases. 
Since we will analyze the expectation for fixed n first and then take the limit, we omit the subindex $n$ below when there is no risk of confusion.

Let $a_{ij,n} = |A_{i,n} \cap A_{j,n}|$ denote the overlap size between domains.

\begin{lemma} \label{lem:singular}
    Let $(R_{1},\dots,R_{d})$ be independent random sets chosen uniformly from $A_{1,n},\cdots, A_{d,n}$.  If there exists $(i,j)\in E$, such that 
$\lambda_{i,j}=\lim\limits_{n\to \infty}\frac{r_{i,n} r_{j,n} a_{ij,n}}{n^2}=\infty $, then the expectation of the left term of \eqref{eq: EtrH4} vanishes, yielding the desired result:
    $$
    \lim_{n \to \infty} \mathbb{E}\left[ (-1)^{\sum_{(i,j)\in E} |R_{i} \cap R_{j}|} \right] = 0 =\prod_{(i,j) \in E} e^{-2\lambda_{i,j}}.
    $$
\end{lemma}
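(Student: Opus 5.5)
Fix an edge $(i_0,j_0)\in E$ with $\lambda_{i_0,j_0}=\infty$. The plan is to condition on every random set except $R_{j_0}$ and show that the conditional expectation of the sign is uniformly small. Write
$$S=\sum_{(i,j)\in E}|R_i\cap R_j| = |R_{j_0}\cap T| + (\text{terms not involving } R_{j_0}) \pmod 2,$$
where $T$ is the symmetric difference of the sets $R_i$ with $(i,j_0)\in E$ or $(j_0,i)\in E$. This holds because parities of intersection sizes add over symmetric differences. Conditioning on $\{R_i\}_{i\ne j_0}$ then gives
$$\Bigl|\E\bigl[(-1)^{S}\bigr]\Bigr|\le \E\Bigl[\bigl|\E\bigl[(-1)^{|R_{j_0}\cap T|}\,\big|\,T\bigr]\bigr|\Bigr].$$
Only $T\cap A_{j_0,n}$ matters. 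Let $m=|T\cap A_{j_0,n}|$ and $r=r_{j_0,n}$.

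\textbf{Step 1 (hypergeometric sign).} For $R$ uniform among $r$-subsets of an $n$-set and a fixed subset of size $m$, the quantity $\E[(-1)^{|R\cap M|}]$ is a Krawtchouk-type value $K_r(m)/\binom{n}{r}$. I would bound its absolute value by something like $\exp(-c\,rm/n)+o(1)$, uniformly when $r/n\to0$. One way is to compare with sampling with replacement or with independent Bernoulli$(r/n)$ inclusion: under the Bernoulli model the expectation is exactly $(1-2r/n)^m\le e^{-2rm/n}$. To transfer the bound, condition on the total count, or use a direct estimate of the Krawtchouk polynomial, or apply the symmetric argument with the roles of $R$ and $M$ exchanged. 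This step is the main technical obstacle, because we need uniformity in $m$ and $r$ with $r/n\to0$. I would first try the coupling: pick $R$ as the first $r$ elements of a uniform random permutation, then expose elements sequentially and bound the total variation from the Bernoulli model.

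\textbf{Step 2 (size of $m$).} Show that $m\,r/n\to\infty$ in probability. The part of $T$ coming from $R_{i_0}$ is $R_{i_0}\cap A_{j_0}$. Its expected size is $r_{i_0}a_{i_0j_0}/n$, so
$$\frac{r_{j_0}}{n}\cdot\frac{r_{i_0}\,a_{i_0j_0}}{n}=\frac{r_{i_0}r_{j_0}a_{i_0j_0}}{n^2}\to\infty .$$
The other sets $R_i$ in the symmetric difference could cancel parts of this. However, conditional on them, $R_{i_0}$ is still uniform, and the intersections of distinct independent $R_i$ inside $A_{j_0}$ are small compared with $|R_{i_0}\cap A_{j_0}|$ (a hypergeometric mean computation, using $r_i/n\to0$). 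So $|T\cap A_{j_0}|\ge |R_{i_0}\cap A_{j_0}|(1-o(1))$ with high probability.

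One caveat is that the overlap of $R_{i_0}$ with another $R_i$ inside $A_{j_0}$ can be comparable to $|R_{i_0}\cap A_{j_0}|$ only if $r_i$ is of order $n$, which is excluded. I would make this precise with Chebyshev bounds, using the hypergeometric variance, which is at most the mean.

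If the symmetric-difference reduction is awkward, an alternative is to condition on all sets except $R_{i_0}$ instead and run the same argument with the roles of $i_0$ and $j_0$ swapped.

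\textbf{Conclusion.} Combine the two steps by splitting on the event $\{m r/n > K\}$. On this event, Step 1 bounds the conditional expectation by $e^{-cK}+o(1)$. The complementary event has vanishing probability by Step 2. Letting $n\to\infty$ and then $K\to\infty$ gives the limit $0$.
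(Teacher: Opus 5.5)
Your argument is correct in outline, and it uses a genuinely different conditioning scheme from the paper's.

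\textbf{The paper's route.} The paper takes $(1,2)\in E$ and sets $S=\bigcup_{k\ge 3}R_k$. It conditions on $R_3,\dots,R_d$ and on the ``fixed parts'' $R_1\cap S$, $R_2\cap S$. After this, only $V_1\cap V_2$ with $V_i=R_i\setminus S$ is random, and $V_1,V_2$ are conditionally independent and uniform on $A_i\setminus S$. It then conditions on $V_1$ and applies the hypergeometric bound $|F(p,q,m)|\le e^{-a_pa_q/(2m)}$, $a_p=\min\{p,m-p\}$, from Lemma 5 of \cite{FTW19}. Finally it controls both $p=|V_1\cap A_2|$ and $q=|V_2|$ by Chebyshev, which requires the survival bound $\Prob(x\notin S)\ge 2^{-(d-2)}$.

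\textbf{Your route.} You use the parity identity $\sum_{i}|R_{j_0}\cap R_i|\equiv |R_{j_0}\cap T| \pmod 2$, summed over the neighbours $i$ of $j_0$ in $E$. This folds every interaction of $R_{j_0}$ into one set $T$, which is fixed once you condition on $\{R_i\}_{i\ne j_0}$. What this buys:
\begin{itemize}
\item you get a one-set problem;
\item the sample size $r_{j_0,n}$ is deterministic, so there is no analogue of the $a_q$ estimate;
\item neither the conditional-uniformity step on $A_i\setminus S$ nor survival probabilities are needed.
\end{itemize}
The price is that cancellation inside the symmetric difference must be controlled. Your Markov/Chebyshev argument does this, because $\E|R_{i_0}\cap A_{j_0}\cap R_i|\le (r_{i,n}/n)\,\E|R_{i_0}\cap A_{j_0}|$ and $\E|R_{i_0}\cap A_{j_0}|=r_{i_0,n}a_{i_0j_0,n}/n\ge r_{i_0,n}r_{j_0,n}a_{i_0j_0,n}/n^2\to\infty$.

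\textbf{Remarks on Step 1.}
\begin{itemize}
\item A bound uniform in $m$ cannot have the form $\exp(-c\,rm/n)+o(1)$: for $m=n$ the sign is deterministic. The correct form uses $\min\{m,n-m\}$, as in the paper's $a_p$. This is harmless here, since $m\le\sum_{i\ne j_0}r_{i,n}=o(n)$.
\item You can quote the same estimate from \cite{FTW19}. Once $r,m\le n/2$ it gives $|\E[(-1)^{|R\cap M|}]|\le e^{-rm/(2n)}$, so Step 1 is no more of an obstacle than in the paper.
\item If you prefer the coupling route, compare only the count $|R\cap M|$ (hypergeometric) with $\mathrm{Bin}(r,m/n)$. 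The two-colour sampling-with/without-replacement bound gives total variation $O(r/n)$, hence $|\E[(-1)^{|R\cap M|}]|\le (1-2m/n)^r+O(r/n)$.
\item Comparing the whole set $R$ with the Bernoulli product model does not give $o(1)$, and neither does comparing the draw sequences (a birthday-type bound of order $r^2/n$). In general $r^2/n$ may diverge, e.g.\ for $r_{k,n}\approx n^{2/3}$ as in the paper's $\varepsilon$-freeness example.
\end{itemize}
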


The second  case is that for all $(i,j)\in E$
$$\lambda_{i,j}=\lim\limits_{n\to \infty}\frac{r_{i,n} r_{j,n} a_{ij,n}}{n^2}<\infty .$$ 
For $(i,j)\in E$, let $X_{i,j}=R_{i}\cap R_{j}$ denote the intersection of the two random subsets $R_{i}$ and $R_{j}$ chosen uniformly from the base sets $A_{i,n}$ and $A_{j,n}$, respectively.
Then $|X_{i,j}|$, the cardinality of the random set $X_{i,j}$, is a random variable. 
Denote the falling factorial by
$$(x)_k:=x(x-1)\cdots(x-k+1).$$  
Then we have  the following limit.

\begin{lemma}\label{lem: n-poisson}
If $\lambda_{i,j}=\lim\limits_{n\to \infty}\frac{r_{i,n} r_{j,n} a_{ij,n}}{n^2}<\infty $ for all $(i,j)\in E$, then, for any $k\in\mathbb{N}$, we have 
    $$
     \lim\limits_{n\to \infty}  \E\left[\binom{\sum_{(i,j)\in E}|X_{i,j}|}{k}\right]= \lim\limits_{n\to \infty}  \E\left[\frac{1}{k!}\left(\sum_{(i,j)\in E}|X_{i,j}|\right)_{k}\right]=\sum_{\substack{k_{i,j}\in \N\\
      \sum_{(i,j)\in E} k_{i,j}=k}}\prod_{(i,j)\in E} \frac{\lambda_{i,j}^{k_{i,j}}}{k_{i,j}!}.
    $$
\end{lemma}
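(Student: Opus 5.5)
The plan is to compute the factorial moments of $S=\sum_{(i,j)\in E}|X_{i,j}|$ by expanding them as counts of ordered tuples of distinct "overlap events". The first equality in the statement is just $\binom{S}{k}=(S)_k/k!$, so only the limit needs proof. Write
\[
S=\sum_{(i,j)\in E}\sum_{x\in A_{i,n}\cap A_{j,n}}\mathbf 1_{\{x\in R_i\}}\mathbf 1_{\{x\in R_j\}},
\]
a sum of indicators indexed by pairs $\alpha=((i,j),x)$. Then $(S)_k$ is the number of ordered $k$-tuples of pairwise distinct indices $\alpha_1,\dots,\alpha_k$ whose indicators are all one. Hence
\[
\E[(S)_k]=\sum_{\alpha_1,\dots,\alpha_k \text{ distinct}}\Prob(\text{all }k\text{ events occur}).
\]
We sort the tuples by how many indices use each edge, say $k_{i,j}$ of them on the edge $(i,j)$ with $\sum_{(i,j)\in E} k_{i,j}=k$. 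The multinomial count of ways to assign positions to edges is $k!/\prod_{(i,j)\in E} k_{i,j}!$, and this produces the desired $1/k_{i,j}!$ factors after dividing by $k!$.

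The main term comes from tuples in which all the points $x$ are distinct, including across different edges. For such a tuple, each vertex $i$ must contain a set $T_i$ of specified distinct points, namely the union of the points on edges incident to $i$. Since the $R_i$ are independent uniform $r_{i,n}$-subsets of $A_{i,n}$,
\[
\Prob(\text{all }k\text{ events occur})=\prod_{i}\frac{(r_{i,n})_{|T_i|}}{(n)_{|T_i|}}.
\]
Because the $|T_i|\le 2k$ are bounded and $r_{i,n}\to\infty$ in the relevant regime (or else the terms vanish anyway), this probability is $(1+o(1))\prod_i (r_{i,n}/n)^{|T_i|}$, which equals $(1+o(1))\prod_{(i,j)\in E}\bigl(r_{i,n}r_{j,n}/n^2\bigr)^{k_{i,j}}$. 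For fixed $(k_{i,j})$, the number of ways to choose the points is $\prod_{(i,j)\in E}(a_{ij,n})_{k_{i,j}}$ up to the subtraction of coincident configurations, and this is $(1+o(1))\prod_{(i,j)\in E} a_{ij,n}^{k_{i,j}}$ whenever $a_{ij,n}\to\infty$. When $a_{ij,n}$ stays bounded and $k_{i,j}>0$, the factor $r_{i,n}r_{j,n}/n^2\to 0$ kills the term, which is consistent with $\lambda_{i,j}=0$. Multiplying gives $\prod_{(i,j)\in E}\lambda_{i,j}^{k_{i,j}}$ in the limit. Some care is needed when $r_{i,n}$ is bounded: then $(r)_m/(n)_m$ is still at most $(r/n)^m$, and the limit is $0$ unless $\lambda=0$ forces the same answer. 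I will handle this by using the upper bound $(r)_m/(n)_m\le (r/n)^m$ together with the lower bound valid when $r\to\infty$.

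The step I expect to be the main obstacle is showing that the degenerate tuples are negligible. These are tuples where the same point $x$ appears on two different edges, e.g. $((i,j),x)$ and $((i,l),x)$ sharing the vertex $i$. Then $|T_i|$ is smaller than the sum of the edge contributions, so the probability gains a factor of $n/r_{i,n}$, while the number of point choices loses a factor of order $a_{\cdot,n}$. The plan is to bound each degenerate configuration by the main-term product times a ratio of the form $n/(r_{i,n}\,a)$. I will control this ratio using $r_{i,n}r_{j,n}a_{ij,n}/n^2\le C$ together with $r_{i,n}/n\to 0$. Concretely, a merged point $x$ that is shared by edges $(i,j)$ and $(i,l)$ contributes
\[
a\cdot\frac{r_i}{n}\cdot\frac{r_j}{n}\cdot\frac{r_l}{n}
\]
instead of the nondegenerate contribution
\[
a^2\,\frac{r_i^2\,r_jr_l}{n^4}.
\]
The degenerate contribution is at most $\min\bigl(\lambda_{ij}\cdot r_l/n,\ \lambda_{il}\cdot r_j/n\bigr)\to 0$, because $a\le a_{ij,n}$ and $r_l/n\to 0$. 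In general each point shared by $t\ge 2$ edges is bounded in the same way using one edge's $\lambda$ and the vanishing factors $r/n$ for the remaining vertices. Summing over the finitely many coincidence patterns then gives $o(1)$. Combining the main term and the negligible degenerate part, and dividing by $k!$, yields the stated formula.
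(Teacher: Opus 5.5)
Your proof is correct and follows the same route as the paper. Both expand the factorial moment over families of point sets $B_{i,j}\subset A_{i,n}\cap A_{j,n}$ (equivalently, ordered tuples of distinct indicator indices), use independence together with $\Prob(T\subset R_i)=(r_{i,n})_{|T|}/(n)_{|T|}$, and count configurations asymptotically as $\prod_{(i,j)\in E}a_{ij,n}^{k_{i,j}}/k_{i,j}!$.

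The one substantive difference is that you bound the overlapping configurations directly inside the expectation: a shared point costs an extra factor $r_{l,n}/n\to 0$ against a bounded $a_{ij,n}r_{i,n}r_{j,n}/n^2$. The paper instead invokes Lemma~\ref{lem:pair_disjoint1} to ``assume'' the $X_{i,j}$ are disjoint. Your treatment is the more rigorous one, since a vanishing probability of overlap does not by itself control $\E\bigl[\binom{S}{k}\bigr]$. Your handling of bounded $a_{ij,n}$ or $r_{i,n}$ also covers cases that the paper's limit $D/\prod a_{ij,n}^{k_{i,j}}\to\prod 1/k_{i,j}!$ silently excludes. Your split into $a_{ij,n}\to\infty$ versus bounded does not cover oscillating $a_{ij,n}$; for those, just note that $(a_{ij,n})_{k_{i,j}}\rho^{k_{i,j}}=\prod_{l<k_{i,j}}(a_{ij,n}\rho-l\rho)\to\lambda_{i,j}^{k_{i,j}}$ when $\rho=r_{i,n}r_{j,n}/n^2\to 0$.
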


Notice that, for $X=\sum_{(i,j)\in E}|X_{i,j}|,$
$$
    (-1)^{|X|}=(1-2)^{|X|}=\sum_{k=0}^{\infty}\binom{|X|}{k}(-2)^{k}=\sum_{k=0}^{\infty}\frac{[(|X|)_k]}{k!}(-2)^{k},
$$ 
by \Cref{lem: n-poisson}, we have
$$
\begin{aligned}
 \lim_{n \to \infty} \mathbb{E}\left[ (-1)^{\sum_{(i,j)\in E} |R_{i} \cap R_{j}|} \right]
    &=\lim_{n \to \infty}\E[(-1)^{\sum_{(i,j)\in E}|X_{i,j}|}]\\
    &=\lim_{n \to \infty}\sum_{k=0}^{\infty}\E[\frac{[(\sum_{(i,j)\in E}|X_{i,j}|)_k]}{k!}(-2)^{k}].\\
     \end{aligned}
$$   

It remains to prove the following equality to complete the proof of \Cref{Main Proposition}.

$$
\begin{aligned}   
\lim_{n \to \infty}\sum_{k=0}^{\infty}\E[\frac{(\sum_{(i,j)\in E}|X_{i,j}|)_k}{k!}(-2)^{k}]
    =\prod_{(i,j) \in E} e^{-2\lambda_{i,j}}.
    \end{aligned}
$$
The proof is given in the next section.    

We provide an example in which $\lim\limits_{n\rightarrow \infty} \frac{r_{k,n}}{n}\neq 0$ for some $k$, and for which the limit in \Cref{Main Proposition} fails.
\begin{example}\label{counter-examples}
Let $n$ be an even number, $A_{1,n}=\{1,...,n\}$ and $A_{2,n}=\{n,...,2n-1\}$, $r_{1,n}=r_{2,n}=n/2$. Then, $|A_{1,n}\cap A_{2,n}|=1$ and 
$$\lim\limits_{n\rightarrow \infty}\frac{r_{1,n}r_{2,n}}{n} \cdot \frac{\lvert A_{1,n}\cap A_{2,n} \rvert}{n}=\frac{1}{4}.$$
For $k\geq 2$, $\mathbb{P}(|R_1\cap R_2|=k)=0$ and 
$$\mathbb{P}(|R_1\cap R_2|=1)=\frac{\binom{n-1}{n/2-1}\binom{n-1}{n/2-1}}{\binom{n}{n/2}\binom{n}{n/2}}=\frac{1}{4}.$$
It follows that
$$\lim_{n \to \infty} \mathbb{E}\left[ (-1)^{|R_{1} \cap R_{2}|} \right]
=(1-\frac{1}{4})-\frac{1}{4}=\frac{1}{2}\neq e^{-2\cdot\frac{1}{4}}=e^{-\frac{1}{2}}.$$
\end{example}

\section{ Proofs of technical lemmas  and comments}

\subsection{Proof of \Cref{lem:singular}}
\begin{proof}[Proof of \Cref{lem:singular}]

The proof of this lemma is similar to that in~\cite{FTW19}; the main difference is that we must account for overlaps between subsystems.

First, note that $R_{1},\dots,R_{d}$ are mutually independent.
Without loss of generality, assume $(u,v)=(1,2)\in E$, that is $\lambda_{1,2}=\lim\limits_{n\to \infty}\frac{r_{1,n}r_{2,n}a_{12,n}}{n^{2}}=\infty$. 
Let $S=\bigcup_{k=3}^{d}R_{k}$ be the set occupied by all the other terms. 
Decompose $R_{1}$ and $R_{2}$ into  fixed and free parts relative to $S$:
\begin{itemize}
    \item Fixed parts: $T_{1} = R_{1} \cap S$ and $T_{2} = R_{2} \cap S$.
    \item Free parts: $V_{1} = R_{1} \setminus S$ and $V_{2} = R_{2} \setminus S$.
\end{itemize}

 Let $\mathcal{F}$ be the $\sigma$-algebra generated by $\{R_{k}\mid k=3,...,d\}$ and the intersections $T_{1}$ and $T_{2}$.
Conditioned on $\mathcal{F}$, the sets $V_{1}$ and $V_{2}$ are independent. Specifically:
\begin{itemize}
    \item $V_{1}$ is uniformly distributed over $A_{1} \setminus S$ with size $|V_{1}| = r_{1,n} - |T_{1}|$.
     \item $V_{2}$ is uniformly distributed over $A_{2} \setminus S
     $ with size $|V_{2}| = r_{2,n} - |T_{2}|$.
\end{itemize}

Conditioning on $\mathcal{F}$, for any $(u,v)\neq (1,2)$,  $\lvert R_{u}\cap R_{v}\rvert$ is measurable with respect to $\mathcal{F}$ and $R_{1} \cap R_{2}$ can be decomposed into the random term $V_{1}\cap V_{2}$ and the fixed term $T_{1}\cap T_{2}$ relative to $S$.
Therefore, we have 
\begin{align*}
\left|\E\left[ (-1)^{\sum\limits_{(i,j)\in E} \lvert R_{i} \cap R_{j}\rvert} \right]\right|
&= \E\left[ (-1)^{\sum\limits_{(1,2)\neq(i,j)\in E} \lvert R_{i} \cap R_{j}\rvert} \E\left[(-1)^{\lvert R_{1} \cap R_{2}\rvert}\mid \mathcal{F}\right]\right] \\
&\leq  \E\left[\left\lvert \E\left[ (-1)^{\lvert R_{1} \cap R_{2}\rvert} \mid \mathcal{F}\right] \right\rvert \right]\\
&=\E \left[\left\lvert \E\left[ (-1)^{\lvert V_{1} \cap V_{2}\rvert} \mid \mathcal{F} \right] \right\rvert \right].
\end{align*}

The last equality follows from the fact that the random part of $R_{1}\cap R_{2}$ given $\mathcal{F}$ lies outside $S$.
It suffices to estimate the random variable 
$$
\lambda=\left\lvert \E\left[ (-1)^{\lvert V_{1} \cap V_{2}\rvert} \mid \mathcal{F} \right]\right\rvert.
$$

Unlike the fully overlapping case, $V_{1}$ and $V_{2}$ lie in different domains; their intersection can occur only in  $A_{1}\cap A_{2}$. 
We perform a further conditioning step to derive the combinatorial formula.

Fix $V_{1}$ and treat $V_{2}$ as the only random variable.
Let $U_{2} = A_{2} \setminus S$ be the available domain for $V_{2}$. 
Then $\lambda$ depends on the part of $V_{1}$ that falls into the domain of $V_{2}$.
Define 
$$
\tilde{V}_{1} = V_{1} \cap U_{2} = V_{1} \cap A_{2}.
$$

Conditioned on $\mathcal{F}$ and $V_{1}$,  $|V_{1} \cap V_{2}|=|\tilde{V}_{1} \cap V_{2}|$.
Hence, we have the conditional expectation:
$$\mathbb{E}\left[ (-1)^{|V_{1} \cap V_{2}|} \mid \mathcal{F}, V_{1} \right] = \sum_{k=0}^{\infty} (-1)^k \frac{\binom{\lvert \tilde{V}_{1}\rvert }{k}\binom{\lvert U_{2} \rvert -\lvert \tilde{V}_{1} \rvert}{\lvert V_{2}\rvert -k}}{\binom{\lvert U_{2} \rvert }{\lvert V_{2}\rvert}}.$$
Define
$$ F(p, q, m)=\sum_k(-1)^k\binom{p}{k}\binom{m-p}{q-k} /\binom{m}{q} .$$
Then
$$ \mathbb{E}\left[ (-1)^{|V_{1} \cap V_{2}|} \mid \mathcal{F}, V_{1} \right] =F(\lvert \tilde{V}_{1} \rvert, \lvert V_{2} \rvert, \lvert U_{2} \rvert).$$

By the proof of Lemma 5 in~\cite{FTW19}, we have 

\begin{equation}\label{eq:estimation for F}
	F(p,q,m)\leq e^{- a_p a_q/2m},
\end{equation}
where $a_p=\min\{p,m-p\}$.

In the following, we denote by $m = \lvert U_{2} \rvert, p = \lvert \tilde{V}_{1} \rvert$ and $q = \lvert V_{2} \rvert.$
For any $k\in \{3,\dots,d\}$ and each $x\in\N$, we have
$$
\mathbb{P}(x\notin R_{k})=
\begin{cases}
1-\dfrac{r_{k,n}}{n}\ge \dfrac12, & x\in A_{k}\\[0.6em]
1, & x\notin A_{k}.
\end{cases}
$$

By independence and the sampling rule, we have $|R_{k}|=r_{k,n}\le n/2$,
hence the survival probability $\mathbb{P}(x\notin S)$ has the uniform lower bound:

\begin{equation}
\label{eq:rho_lower}
\mathbb{P}(x\notin S)=\prod_{k=3}^{d}\mathbb{P}(x\notin R_{k}) \ge 2^{-(d-2)}>0.
\end{equation}
Let $$c_d=2^{-(d-2)}, \quad \Lambda_{2}:=\sum_{x\in A_{2,n}} \mathbb{P}(x\notin S),\quad
\Lambda_{12}:=\sum_{x\in A_{1,n}\cap A_{2,n}}\mathbb{P}(x\notin S).$$
By \eqref{eq:rho_lower} and $|A_{2,n}|=n$, we have 
\begin{equation}\label{property of Lambda}
\Lambda_{2}\ge c_d\,n,\qquad
\Lambda_{12}\ge c_d\,a_{12,n}.
\end{equation}
The above lower bounds supply intuitive information, the fact the set still contains the same magnitude of elements as before. 
We also have the upper bound of $m$,
\begin{equation} \label{property of m_n}
	m= \lvert A_{2}\setminus S\rvert\leq \lvert A_{2}\rvert=n,
\end{equation}
which, together with the lower bounds, will be used to complete the estimate and finish the proof.

\medskip
All that remains is to bound $a_{p}$ and $a_{q}$ in probability to control $\lambda$, for which we first estimate the expectation and variance of $p$ and $m-p$ first.
\subsubsection{Estimate for $a_{p}$}
Recall that $p=\lvert \tilde{V}_{1} \rvert$. Its expectation is 

$$\begin{aligned}
	\E[p]&=\sum_{x\in A_{1}\cap   A_{2}} \Prob (x\in R_{1})\prod_{k=3}^{d}\Prob (x\notin R_{k})\\&
	 =\sum_{x\in A_{2}}\Prob(x\in R_{1})\prod_{k=3}^{d}\Prob (x\notin R_{k})\\&
	 =\frac{r_{1,n}}{n}\Lambda_{12}.
\end{aligned}$$

On the other hand, we have 

\begin{align*}
	\E[p^{2}]&=\sum_{x\in A_{1}\cap   A_{2}} \Prob (x\in \tilde{V}_{1})+\sum_{x\neq y \in A_{1}\cap   A_{2}}\Prob (x,y \in \tilde{V}_{1}) \\
	&=\E[p]+ \E[p]^2- \sum_{x\in A_{1}\cap   A_{2}} \Prob^2 (x\in \tilde{V}_{1})\\
	&<\E[p]+ (\E[p])^2.
\end{align*}

Thus, the variance satisfies:
$$
	\Var(p)=\E[p^{2}]-\left( \E[p]\right)^{2}<\E[p].
$$
By Chebyshev's inequality, we have 
\begin{equation}\label{eq:estimate for p_n}
	\Prob(p\leq \frac{1}{2}\E[p])\leq \Prob(\lvert p- \E[p]\rvert \ge \frac{1}{2}\E[p])\leq \frac{\Var(p)}{(\frac{1}{2}\E[p])^{2}}<\frac{4}{\E[p]}=\frac{4n}{r_{1,n}\Lambda_{12}}.
\end{equation}

Similarly, since
$$\E[m - p]=\Lambda_{2}-\frac{r_{1,n}}{n} \Lambda_{12}\geq\frac{1}{2}\Lambda_{2},$$
 we have 
\begin{equation}\label{eq:estimate for m_n-p_n}
	\Prob\left(m-p\leq \frac{1}{2}\E[m-p]\right)<\frac{4}{\E[m-p]}=\frac{8}{\Lambda_{2}}.
\end{equation}
Combining \eqref{eq:estimate for p_n} and \eqref{eq:estimate for m_n-p_n} and using $\E[p]\leq\E[m-p]$, we have the probabilistic bound for $a_{p}$:
\begin{equation}\label{probability bound for a_{p}}
	\Prob\left(a_{p}\leq \frac{1}{2}\E[p]\right)\leq \Prob\left(m-p\leq \frac{1}{2}\E[m-p]\right)+\Prob(p\leq \frac{1}{2}\E[p])\leq \frac{4n}{r_{1,n}\Lambda_{12}}+\frac{8}{\Lambda_{2}}.
\end{equation}

\subsubsection{Estimate for $a_{q}$}	
To compare $p$ and $q$, note that $p$ counts  elements in
$V_1 \cap A_{2}$ that are not in $S$, while $q$ counts elements in $A_{2}$ that are not in $S$, this corresponds to the difference between
$\Lambda_{2}$ and $\Lambda_{12}$. 
Hence, we have the following estimates, analogous to those for $a_{p}$:
\begin{enumerate}
	\item $\E[q]=\frac{r_{2,n}}{n}\Lambda_{2}$, $\Var(q)\leq \E[q]$.
	\item $\E[m-q]=(1-\frac{r_{2,n}}{n})\Lambda_{2}$, $\Var(m-q)\leq \E[m-q]$.
	\item $\E[q]\leq\E[m-q]$.
\end{enumerate}
The corresponding probabilistic bound for $a_{q}$ is
\begin{equation}\label{probability bound for a_{q}}
	\Prob\left(a_{q}\leq \frac{1}{2}\E[q]\right)\leq \Prob\left(m-q\leq \frac{1}{2}\E[m-q]\right)+\Prob(q\leq \frac{1}{2}\E[q])\leq \frac{4n}{r_{2,n}\Lambda_{2}}+\frac{8}{\Lambda_{2}}.
\end{equation}

Combining \eqref{probability bound for a_{p}} and \eqref{probability bound for a_{q}}, we have
$$
\begin{aligned}
	\Prob\left( \left\{ a_{p} \leq \frac{1}{2}\frac{r_{1,n}}{n}\Lambda_{12} \right\} \cup \left\{ a_{q} \leq \frac{1}{2}\frac{r_{2,n}}{n} \Lambda_{2} \right\} \right) 
		&\leq \Prob \left( \left\{ a_{p} \leq \frac{1}{2}\frac{r_{1,n}}{n}\Lambda_{12}\right\} \right)+\Prob\left( \left\{a_{q}\leq \frac{1}{2}\frac{r_{2,n}}{n}\Lambda_{2}\right\}\right)\notag\\
		&\leq \frac{4n}{r_{1,n}\Lambda_{12}}+\frac{8}{\Lambda_{2}}+\frac{4n}{r_{2,n}\Lambda_{2}}+\frac{8}{\Lambda_{2}}. 
\end{aligned}
$$

Now, applying the above estimate above to the bound \eqref{eq:estimation for F}, we have
$$
\begin{aligned}
	&\left|\E\left[ (-1)^{\sum_{(i,j)\in E} |R_{i} \cap R_{j}|} \right] \right|\\  
	\leq &  \E[ \lambda]\\
	\leq & \E[e^{-\frac{a_{p}a_{q}}{2m}}] \\
	< &\exp\left(-\frac{1}{8m}\frac{r_{1,n}r_{2,n}}{n}\frac{\Lambda_{12}\Lambda_{2}}{n}\right)+1\cdot \left(\frac{4n}{r_{1,n}\Lambda_{12}}+\frac{8}{\Lambda_{2}}+\frac{4n}{r_{2,n}\Lambda_{2}}+\frac{8}{\Lambda_{2}} \right)\\
	\leq &\exp \left(-\frac{c_{d}^{2}}{8}\frac{r_{1,n}r_{2,n}a_{12,n}}{n^{2}}\right)+\frac{4}{c_d}\left( \frac{n}{r_{1,n}a_{12,n}}+\frac{1}{r_{2,n}}+\frac{4}{n} \right).
\end{aligned}
$$
Since $ \lim\limits_{n\rightarrow \infty}\frac{r_{1,n}r_{2,n}a_{12,n}}{n^{2}}= \infty$, $\lim\limits_{n\rightarrow \infty}\frac{r_{1,n}}{n}=\lim\limits_{n\rightarrow \infty}\frac{r_{2,n}}{n}=0$ and $a_{12,n}<n$, we have
$$ \lim\limits_{n\rightarrow \infty}\frac{r_{1,n}r_{2,n}}{n}= \infty,\,\text{and}\,\lim\limits_{n\rightarrow \infty}\frac{r_{1,n}a_{12,n}}{n}= \infty, $$
thus,
$$\lim\limits_{n\rightarrow \infty} \frac{n}{r_{1,n}a_{12,n}}=0\,\text{and}\, \lim\limits_{n\rightarrow \infty} \frac{n}{r_{2,n}}=0. $$
It follows that 
$$\lim\limits_{n\rightarrow \infty} \E\left[ (-1)^{\sum_{(i,j)\in E} |R_{i} \cap R_{j}|} \right]=0.$$
\end{proof}

\subsection{Proof of \Cref{lem: n-poisson}}
Recall the assumption that for all $(i,j)\in E$
 $$\lambda_{i,j}=\lim\limits_{n\to \infty}\frac{r_{i,n} r_{j,n} a_{ij,n}}{n^2}<\infty .$$ 
 
\begin{lemma}\label{lem:pair_disjoint1}
For any two distinct pairs $(i_1,j_1)$ and $(i_2,j_2)\in E$, we have
$$ 
\lim\limits_{n\rightarrow \infty}\Prob\big( X_{i_1,j_1}\cap X_{i_2, j_2}\neq \emptyset \big)=0.$$
\end{lemma}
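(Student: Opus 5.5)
We will bound $\Prob(X_{i_1,j_1}\cap X_{i_2,j_2}\neq\emptyset)$ by the expected number of common points, using a union bound over $x\in\N$:
$$\Prob\big(X_{i_1,j_1}\cap X_{i_2,j_2}\neq\emptyset\big)\le \sum_{x}\Prob\big(x\in R_{i_1}\cap R_{j_1}\cap R_{i_2}\cap R_{j_2}\big).$$
Two distinct pairs share at most one index. The set $\{i_1,j_1,i_2,j_2\}$ therefore has either four elements or three elements; in the latter case we write it as $\{a,b,c\}$ with the pairs $(a,b)$ and $(a,c)$ (up to relabeling).

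Since the $R_k$ are independent and uniform, $\Prob(x\in R_k)=\frac{r_{k,n}}{n}\mathbf 1_{x\in A_{k,n}}$. In the three-index case this gives
$$\sum_x \Prob(x\in R_a\cap R_b\cap R_c)=\frac{r_{a,n}r_{b,n}r_{c,n}}{n^3}\,|A_{a,n}\cap A_{b,n}\cap A_{c,n}|\le \frac{r_{a,n}r_{b,n}a_{ab,n}}{n^2}\cdot\frac{r_{c,n}}{n}.$$
The first factor tends to $\lambda_{a,b}<\infty$ and the second tends to $0$ by the hypothesis $r_{c,n}/n\to 0$, so the bound vanishes. The four-index case is the same: the sum is at most
$$\frac{r_{i_1,n}r_{j_1,n}a_{i_1j_1,n}}{n^2}\cdot\frac{r_{i_2,n}}{n}\cdot\frac{r_{j_2,n}}{n}\to 0.$$

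The only point needing care is that in Lemma \ref{lem: n-poisson} the indices in $E$ label distinct random sets $R_i$, one per index $i\in[d]$. A coincidence such as $i_1=i_2$ therefore really means the same random set, and this is exactly what produces the three-index case above. No step here is a genuine obstacle: the main estimate is the first-moment bound, and the hypotheses $\lambda_{i,j}<\infty$ and $r_{k,n}/n\to0$ are exactly what make it work.
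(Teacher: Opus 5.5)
Your proposal is correct and matches the paper's proof: a union bound over the points of $A_{i_1,n}\cap A_{j_1,n}\cap A_{i_2,n}\cap A_{j_2,n}$, split into the three-index and four-index cases, with that intersection bounded by $a_{i_1j_1,n}$. As in the paper, the resulting bound tends to zero because $\lambda_{i_1,j_1}<\infty$ and $r_{k,n}/n\to 0$ for the extra index.
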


\begin{proof}
Without loss of generality, assume $a_{i_1 j_1,n} = |A_{i_1,n} \cap A_{j_1,n}| \geq a_{i_2 j_2,n} = |A_{i_2,n} \cap A_{j_2,n}|$. 
Notice that 
$$X_{i_1,j_1}\cap X_{i_2, j_2}\subset A_{i_1,n} \cap A_{j_1,n}\cap A_{i_2,n} \cap A_{j_2,n}. $$
Let $S=A_{i_1,n} \cap A_{j_1,n}\cap A_{i_2,n} \cap A_{j_2,n}$. 

Case 1: The two intersections involve exactly three distinct base sets. Without loss of generality, we may assume that $j_1=j_2$. 
Then we have

$$
\begin{aligned}
&\Prob\big( X_{i_1,j_1}\cap X_{i_2, j_2}\neq \emptyset \big)\\
\leq& \sum_{b \in S} \Prob\big(b \in R_{i_1,n} \cap R_{j_1,n} \cap R_{i_2,n}
\big) \\
=&\sum_{b \in S} \frac{\binom{n-1}{r_{i_1,n}-1}\binom{n-1}{r_{j_1,n}-1}\binom{n-1}{r_{i_2,n}-1}}{\binom{n}{r_{i_1,n}}\binom{n}{r_{j_1,n}}\binom{n}{r_{i_2,n}}}  \\
=&\sum_{b \in S} \frac{r_{i_1,n}}{n} \cdot \frac{r_{j_1,n}}{n} \cdot \frac{r_{i_2,n}}{n} .
\end{aligned}
$$
Notice that 
$|S|\leq \min \{a_{i_1 j_1,n},a_{i_2 j_2,n} \}$. 
It follows that 
 $$\Prob\big( X_{i_1,j_1}\cap X_{i_2, j_2}\neq \emptyset \big)
\leq a_{i_1 j_1,n}\frac{r_{i_1,n}}{n} \cdot \frac{r_{j_1,n}}{n} \cdot \frac{r_{i_2,n}}{n}. $$
Since  $ \lim\limits_{n\to \infty} \frac{r_{i_2,n} }{n}=0$, we have 

$$\lim\limits_{n\rightarrow\infty} \Prob\big( X_{i_1,j_1}\cap X_{i_2, j_2}\neq \emptyset \big)\leq \lim\limits_{n\rightarrow\infty}\frac{r_{i_2,n}}{n} \cdot \frac{a_{i_1 j_1,n} r_{j_1,n}r_{i_1,n}}{n^2}=0\cdot\lambda_{i_1,j_1}=0. $$
  
Case 2: $i_1, j_1, i_2, j_2$ are all distinct.
 Then we have 
 
$$
\begin{aligned}
\Prob\big( X_{i_1,j_1}\cap X_{i_2, j_2}\neq \emptyset \big)
\leq& \sum_{b \in S} \Prob\big(b \in R_{i_1,n} \cap R_{j_1,n} \cap R_{i_2,n} \cap R_{j_2,n
}\big) \\
\leq& a_{i_1 j_1,n}\frac{r_{i_1,n}}{n} \cdot \frac{r_{j_1,n}}{n} \cdot \frac{r_{i_2,n}}{n} \cdot \frac{r_{j_2,n}}{n}\to 0.
\end{aligned}
$$
\end{proof}

Now we are ready to prove \Cref{lem: n-poisson}.

\begin{proof}[proof of \Cref{lem: n-poisson}]
Let $A_{ij,n}=A_{i,n}\cap A_{j,n}.$
For any $k\in\mathbb{N}$, by \Cref{lem:pair_disjoint1}, we may assume that the sets $X_{i,j}$'s are pairwise disjoint. Then we have 
    $$
     \E\left[ \binom{\sum_{(i,j)\in E}|X_{i,j}|}{k}\right]= \sum \limits_{|B|=k }
\mathbb{P}\big(B\subseteq \bigcup\limits_{(i,j)\in E}X_{i,j}\big)+o(1).
$$

Since the $X_{i,j}$'s are pairwise disjoint, each $B\subseteq \bigcup\limits_{(i,j)\in E}X_{i,j}$ has a unique decomposition  $B=\bigsqcup\limits_{(i,j)\in E}B_{i,j}$ with $B_{i,j}\subset X_{i,j}$.  Conversely, any choice of subsets $B_{i,j}\subset X_{i,j}$ with $\sum \limits_{(i,j)\in E}|B_{i,j}|=k$ yields
$B=\bigcup\limits_{(i,j)\in E}B_{i,j}$ of size $k$. 
Thus, we have 
$$ \begin{aligned}
\mathbb{P}\big(B\subseteq \bigcup\limits_{(i,j)\in E}X_{i,j}\big)&=\sum\limits_{\sum \limits_{(i,j)\in E}|B_{i,j}|=k }\Prob\left( \bigcap_{(i,j) \in E} \{B_{i,j} \subset X_{i,j}\} \right)\\&= \sum\limits_{\substack{|B_{i,j}|=k_{i,j}\\\sum \limits_{(i,j)\in E}|B_{i,j}|=k }}\Prob\left( \bigcap_{(i,j) \in E} \{B_{i,j} \subset X_{i,j}\} \right).
\end{aligned}$$

For a fixed family $\{B_{i,j}\mid |B_{i,j}|=k_{i,j}\}$, let $c_i = \sum_{(i,j) \in E} k_{i,j} + \sum_{(j,i) \in E} k_{j,i}$ denote the total number of distinct elements required from $R_i$.
 Since the $B_{i,j}$'s are disjoint, the required elements in each $R_i$ are distinct. By  independence of $\{R_i\}$, the probability factors as
$$
\Prob\left( \bigcap_{(i,j) \in E} \{B_{i,j} \subset X_{i,j}\} \right) = \prod_{i=1}^{d} \frac{(r_{i,n})_{c_i}}{(n)_{c_i}}.
$$
Let $D$ be the number of such disjoint families $\{B_{i,j}\mid \lvert B_{i,j}\rvert=k_{i,j}\}$. 
Since $B_{i,j}$ is chosen from $A_{ij,n}$, we have 
 $$\prod_{(i,j) \in E} \binom{a_{ij,n}-k}{k_{i,j}}\leq D\leq \prod_{(i,j) \in E} \binom{a_{ij,n}}{k_{i,j}}.$$
Hence
 $$\lim\limits_{n\rightarrow \infty} \frac{D}{\prod_{(i,j) \in E}a_{ij,n}^{k_{i,j}}}=\prod_{(i,j) \in E}\frac{1}{k_{i,j}!}.$$
 
The $c_i$'s are fixed finite numbers, thus 
 $$ \lim\limits_{n\rightarrow \infty}\frac{(r_{i,n})_{c_i}/(n)_{c_i}}{(r_{i,n}/n)^{c_i}}=1.$$
Regrouping the powers of $r_i/n$ from $\{R_i\}_{i=1}^{d}$ according to $(i,j)\in E$, we have 
$$
\prod_{i=1}^{d} \left( \frac{r_{i,n}}{n} \right)^{c_i} = \prod_{(i,j) \in E} \left( \frac{r_{i,n} r_{j,n}}{n^2} \right)^{k_{i,j}}.
$$

Therefore, 
$$
\begin{aligned}
&\lim\limits_{n\rightarrow \infty}\sum\limits_{|B_{i,j}|=k_{i,j}}\Prob\left( \bigcap_{(i,j) \in E} \{B_{i,j} \subset X_{i,j}\} \right)\\
=&\lim\limits_{n\rightarrow \infty}D \prod_{i=1}^{d} \frac{(r_{i,n})_{c_i}}{(n)_{c_i}}\\
=&\lim\limits_{n\rightarrow \infty}\prod_{(i,j) \in E} \frac{1}{k_{i,j}!}\left( a_{ij,n} \frac{r_{i,n} r_{j,n}}{n^2} \right)^{k_{i,j}} \\
=& \prod_{(i,j) \in E}\frac{\lambda_{i,j}^{k_{i,j}}}{k_{i,j}!}.
\end{aligned}
$$
The statement follows.
\end{proof}

\subsection{Proof of \Cref{Main Proposition}}

By \Cref{lem:singular}, the statement holds if there exists $(i,j)\in E$, such that $\lambda_{i,j}=\lim\limits_{n\to \infty}\frac{r_{i,n} r_{j,n} a_{ij,n}}{n^2}= \infty$.

Now, we assume that $\lambda_{i,j}=\lim\limits_{n\to \infty}\frac{r_{i,n} r_{j,n} a_{ij,n}}{n^2}<\infty $ for all $(i,j)\in E$. 

Then, there exists $M>0$ such that $|\frac{r_{i,n} r_{j,n} a_{ij,n}}{n^2}|\leq M$ for all $(i,j)\in E$ and $n\in \N$. 

Since $r_{i,n}<n$, we have $\frac{r_{i,n}}{n}\geq \frac{r_{i,n}-k}{n-k}$ for any $0\leq k< r_{i,n}$. Using the notation and computations from the proof of \Cref{lem: n-poisson},  we have that 

$$\sum\limits_{|B_{i,j}|=k_{i,j}}\Prob\left( \bigcap_{(i,j) \in E} \{B_{i,j} \subset X_{i,j}\} \right)<\prod_{(i,j) \in E}\frac{1}{k_{i,j}!}\left( a_{ij,n} \frac{r_{i,n} r_{j,n}}{n^2} \right)^{k_{i,j}}<\prod_{(i,j) \in E}\frac{1}{k_{i,j}!}\left(M \right)^{k_{i,j}}.$$
 
Thus, we have
$$
\E\left[ \binom{\sum_{(i,j)\in E}|X_{i,j}|}{k}\right]
\le
\sum_{\substack{k_{i,j}\in\mathbb{N}\\ \sum_{(i,j)\in E} k_{i,j}=k}}
\prod_{(i,j)\in E}\frac{M^{k_{i,j}}}{k_{i,j}!}.
$$

Let
$$
a_k=\sum_{\substack{k_{i,j}\in\mathbb{N}_0\\ \sum_{(i,j)\in E} k_{i,j}=k}}
\prod_{(i,j)\in E}\frac{M^{k_{i,j}}}{k_{i,j}!}.
$$

Then,
$$\sum\limits_{k=0}^\infty 2^k a_k= e^{2M|E|}<\infty.$$

By the dominated convergence theorem, we have

$$
\begin{aligned}   
\lim_{n \to \infty}\sum_{k=0}^{\infty}\E[\frac{[(\sum_{(i,j)\in E}|X_{i,j}|)_k]}{k!}(-2)^{k}]
    &=\sum_{k=0}^{\infty}\sum_{\substack{k_{i,j}\in \N\\
      \sum_{(i,j)\in E} k_{i,j}=k}}\prod_{(i,j)\in E} \frac{\lambda_{i,j}^{k_{i,j}}}{k_{i,j}!}(-2)^{k_{i,j}}\\
    &=\prod_{(i,j)\in E}\sum_{k_{i,j}=0}^{\infty} \frac{\lambda_{i,j}^{k_{i,j}}}{k_{i,j}!}
      (-2)^{k_{i,j}}\\
      &=\prod_{(i,j) \in E} e^{-2\lambda_{i,j}}.
    \end{aligned}
$$

This completes the proof of \Cref{Main Proposition}. \Cref{main theorem} then follows.

\subsection{Comments and further questions}

In summary, we have evaluated certain joint distributions of SYK models from different systems with overlaps. 
By Example \ref{counter-examples}, we know that \Cref{main theorem} may fail if $\lim\limits_{n\rightarrow \infty}\frac{r_{i,n}}{n}\neq 0$.
Therefore, we have not considered the case for $\lim\limits_{n\rightarrow \infty}\frac{r_{i,n}}{n}\neq 0$ in general. 
This excludes interesting examples such as $r_{1,n}=\frac{n}{r}$, $r_{2,n}=\lfloor\sqrt{sn}\rfloor$, and $a_{12,n}=\lfloor\sqrt{n}\rfloor$ for $r\geq 2$ and $s\geq 0$. 
In this case, we know that the first model converges to a semicircular element, and the second model is $q$-Gaussian with $q=e^{-2s}$. 
However, it is not easy to determine the $q_{i,j}$-relations between the limits of these two models.  

By adjusting the intersections of $A_{i,n}\cap A_{j,n}$ in \Cref{main theorem} , we are able to obtain all limiting distributions for mixed $q$-Gaussian systems for $q_{i,i}=0$ and $q_{i,j}\in [0,1]$. 
Notice that to obtain negative $q_{i,j}$, it requires $r_{i,n}$ and $r_{j,n}$ to be odd. It follows that, if $q_{i,j}$ and $q_{j,k}$ are negative, then $r_{i,n}$ and $r_{k,n}$ are odd, and $q_{i,k}$ must be non-positive in our SYK models.

Now we pose two questions related to our work:
\begin{enumerate}
    \item  In general, what are the $q_{i,j}$-relations between SYK models if there exists at least one index $i$ with $\lim_{n\to\infty}\frac{r_{i,n}}{n}\neq 0$.
    \item How can one obtain all mixed $q$-Gaussian relations from the limiting joint distributions of SYK models?
\end{enumerate}

\vspace{2cm}

\noindent{\bf Acknowledgment:} This work was supported by Zhejiang Provincial Natural Science Foundation of China under
Grant No. LR24A010002 and  National Natural Science Foundation of China under Grant No. 12171425.

\bibliographystyle{plain}

\bibliography{references}

\end{document}